\documentclass{amsart}

\usepackage{amsmath,amsfonts,amssymb}
\usepackage{graphicx,psfrag,subfigure}

\newtheorem{thm}{Theorem}
\newtheorem{rk}{Remark}
\newtheorem{prop}{Proposition}

\newtheorem{lemma}{Lemma}
\newtheorem{defi}{Definition}

\newcommand{\R}{{\mathbb{R}}}

\newcommand{\N}{{\mathbb{N}}}

\DeclareMathOperator{\fix}{Fix}

\begin{document}
\title{Sphere branched coverings and the growth rate inequality}
\author{J.Iglesias, A.Portela, A.Rovella and J.Xavier}

\begin{abstract}  We show that the growth inequality rate $$\limsup \frac{1}{n} \log (\# \fix (f^n))\geq \log d$$  holds for branched coverings of degree $d$ of the sphere $S^2$ having a 
completely invariant simply connected region $R$ with locally connected boundary, except in some degenerate cases with known couterexamples.

\end{abstract}

\maketitle

\section{Introduction}
This paper deals with the following open problem:  let $f: S^2 \to S^2$ be a continuous map of degree $d$, $|d|>1$, and let $N_nf$ denote the number of fixed points of $f^n$. When does 
the growth rate inequality $\limsup \frac{1}{n} \log N_nf\geq \log d$ hold for $f$? (This is Problem 3 posed in \cite{shub2}).

It is known that this inequality does not always hold. The simplest example is the map expressed in polar coordinates in $\R^2$ as 
$(r,\theta)\to (dr,d\theta)$ and extended to the sphere with $\infty\to \infty$. It has degree $d$ and just two periodic points. In \cite{iprx2} other examples are presented, where
the nonwandering set is not reduced to the set of periodic points.  On the other hand, the inequality is known to hold if $f$ is a rational map \cite{shub}, if $f$ is $C^1$ and preserves
the latitude foliation \cite{ps} and \cite{mis}, if the critical points form a two-periodic cycle \cite{iprx3}, and if all periodic orbits are isolated as invariant sets and $f$ has no sources of degree
$r$, $|r|\geq 1$ \cite{lhc}.  Whenever the growth rate inequality holds, we say that $f$ {\it has the rate}.

We will work with branched coverings and  make the following assumption: there exists $R\subset S^2$  connected, open, proper (i.e. a {\it region}), simply connected and completely invariant 
($f^ {-1} (R) = R$).  
This allows the localization of the set where the periodic points live:  the boundary of $R$. These two assumptions (branched covering + completely invariant simply connected region) are 
strong assumptions,
but there will be more, as all known examples of maps not having the rate satisfy these two assumptions. These examples also satisfy that there are exactly two fixed critical points of 
multiplicity $d-1$, one in $R$ and the other one in the boundary of $R$.   It follows that $f$ can be thouhgt as a covering map of the open annulus $\R ^2 \backslash \{0\}$.  Related work in the annulus was developed in the 
sequel \cite{iprx1}, \cite{iprx2}, \cite{iprx3}, where various sets of sufficient conditions for a map to have  the rate are given.

A {\it branched covering} of the sphere is a covering map of the sphere except at a finite number of critical points were it is locally conjugate to 
$z\to z^k$ in the open disc, for some integer $k\geq 2$ depending on the critical point. The {\it multiplicity} of the critical point is $k-1$  and it is well known (Riemann-Hurwitz formula) that the sum of the multiplicities of critical points is 
equal to $2d-2$, where $d$ is the degree of $f$. Furthermore, if a region $R$ is completely invariant and simply connected, then it contains exactly $d-1$ critical 
points (counted with multiplicities). 

As pointed out before,  having exactly one fixed critical point of 
multiplicity $d-1$ in the boundary of $R$ is an obstruction to having the rate.  In the case that the boundary of $R$ is locally connected, we show that this is the only one:\\

\noindent
{\bf Theorem A.} 
{\em Let $f$ be a degree $d$ branched covering of the sphere, where $|d|>1$. Assume that there exists a completely invariant simply connected region $R$ whose boundary component is locally connected. Assume moreover that it is not the case that there exists only one critical point in the boundary of $R$ that has multiplicity $d-1$ and is fixed by $f$. Then $f$ has the rate.}\\

We do not know if the local connectivity hypothesis is  necessary (but suspect it is not).
A main ingredient in the proof of Theorem A is Theorem \ref{t2}, that states that $f$ extends continuously to the prime end closure of $R$. Now, this extension gives a circle map of 
degree $d$ which, of course, has the rate. Now, using that  the boundary of $R$ is locally connected, to each periodic point in this circle at infinity, corresponds a periodic point of
$f$ in the boundary of $R$. However, this correspondece is not injective, so in order to get the rate one has to understand how many different rays can land at the same point.

An example to have in mind is when $f$ is a complex polynomial with connected and locally connected Julia set. Then $f$ has a supperatracting fixed point at infinity and the region $R$ is
its basin of attraction, which is the complement of the filled Julia set. Figure \ref{rayos} shows different periodic rays landing at the same point, namely the periodic orbit 
$1/7 \to 2/7 \to 4/7$ is reduced to a point in the boundary of $R$.  More figures illustrating this
phenomenon can be found in Chapter 18 in \cite{milnor} .
\begin{figure}[ht]
\psfrag{17}{$\frac{1}{7}$}
\psfrag{27}{$\frac{2}{7}$}
\psfrag{47}{$\frac{4}{7}$}

\begin{center}
{\includegraphics[scale=0.3]{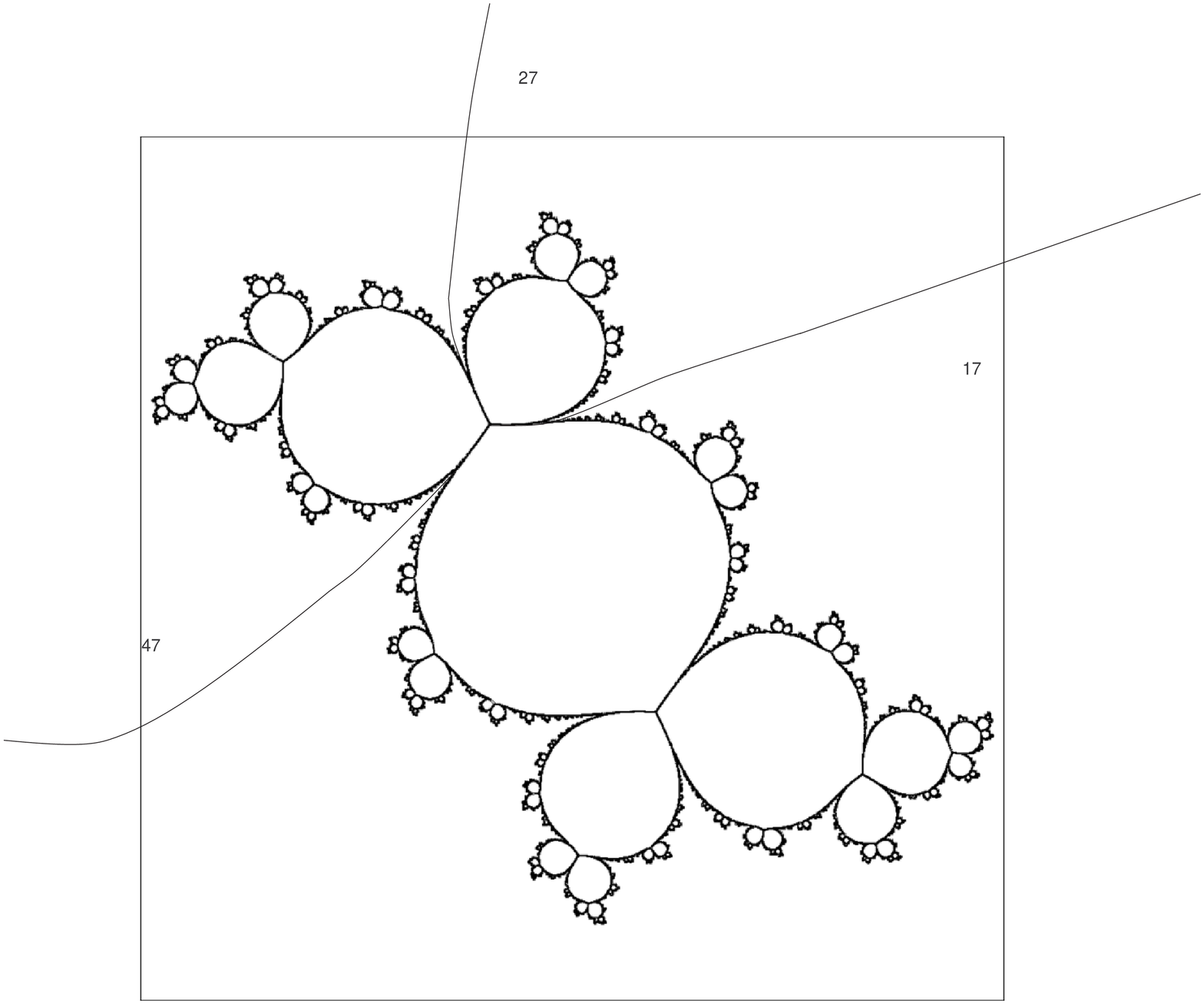}}

\caption{Julia set for $f(z) = z^2 - 0.110 + 0.6557i$}\label{rayos}
\end{center}
\end{figure}

So, in particular, 
Theorem A is a topological version of the fact that complex polynomials (with connected and locally connected Julia set) have the rate.  Following the polynomial analogy, the opposite situation 
corresponds to the case when all  critical points belong to the complement of the filled Julia set. We also give a topological version of the fact
that such maps have the rate: \\

\noindent
{\bf Theorem B.} 
{\em Let $f$ be a degree $d$ branched covering of the sphere, where $|d|>1$. Assume that there exists  a simply connected open set $U$ whose closure is disjoint from the set of critical
values and
such that $\overline{f^{-1} (U)}\subset U$. Then $f$ has the rate.}\\

\section{Proof of the Theorems.}

In this section we prove a theorem extending Caratheodory's theory. This result will be used to establish a relation between the periodic points of $f$ and those of $m_d$, the map 
defined in the unit circle as $m_d(z)=z^d$.


\subsection{Maps induced in the circle.}\label{21}

In this section no dynamics are involved. It contains just a generalization of a well known theorem of extension of a homeomorphism of a simply connected plane
region $R$ to its prime ends closure : Caratheodory's Theorem. There are no assumptions on the structure of the boundary of $R$. We will show that a branched covering also extends to 
the prime ends closure of $R$. We recall
some definitions first. A crosscut in $R$ is a simpe arc whose interior is contained in $R$ and whose extreme points belong to $K$, the boundary of $R$. Fix a point $0$ in $R$. Given 
a crosscut $c$ define $N(c)$ as the component of $R\setminus c$ not containing $0$. A sequence of crosscuts $\{c_n\}$ is admissible if their lenghts tends to zero and
$N(c_{n+1})\subset N(c_n)$. Two sequences $\{c_n\}$ and $\{c'_n\}$ of admissible crosscuts are equivalent if for every $m$ there exist $n$ and $n'$ such that
$N(c'_m)\subset N(c_n)$ and $N(c_m)\subset N(c'_{n'})$. An equivalence class of admissible crosscuts define a prime end. This construction has the following property proved by 
Carath\'eodory: Let $\tilde R$ be the union of $R$ with the set of prime ends. Then there is a topology on $\tilde R$ turning $\tilde R$ into a topological closed disc whose
interior is homeomorphic to 
$R$ with its plane topology. Moreover, if $f$ is a homeomorphism of the closure of $R$ in the plane, then $f$ has a continuous extension to $\tilde R$. The following is an extension of
this result.
 
\begin{thm}
\label{t2}
Let $f$ be a degree $d$ branched covering of the sphere, and $R$ a simply connected completely invariant region whose complement contains more than one point. Then $f$ extends to a continuous map $\tilde f$ in the prime ends closure of $R$. Moreover $\tilde f$ restricted to the boundary of $\tilde R$ is a 
degree $d$ covering map.
\end{thm}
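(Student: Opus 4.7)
The plan is to pull back to the unit disc by a Riemann map and then argue by a crosscut separation argument. Take a conformal equivalence $\phi\colon\D\to R$ and set $g=\phi^{-1}\circ f\circ\phi\colon\D\to\D$. Since $f^{-1}(R)=R$, the map $g$ is proper (preimages of compact sets of $\D$ are compact in $\D$) and inherits the structure of a topological branched cover of degree $d$. Carath\'eodory's classical theorem provides a homeomorphism $\tilde\phi\colon\overline{\D}\to\tilde R$ extending $\phi$, so proving Theorem \ref{t2} reduces to showing that $g$ extends continuously to $\overline{\D}$, with $\tilde g|_{S^1}$ a degree $d$ covering map.

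\textbf{Main step: existence of the boundary extension.} Fix $z_n\to z_0\in S^1$ in $\D$. By properness, every accumulation point of $g(z_n)$ lies in $S^1$; suppose for contradiction that two distinct ones $w_0\ne w_1$ arise along subsequences $z_{n_k}$, $z_{m_k}$. The idea is to choose a chord $\alpha$ of $\D$ separating $w_0$ from $w_1$, with endpoints at accessible prime ends, so that the corresponding crosscut $c'=\phi(\alpha)$ of $R$ avoids the finite set of critical values of $f$, and, by a further generic perturbation, so that neither the endpoints of $c'$ nor those of the components of $f^{-1}(c')$ meet the impression of $z_0$ in $\partial R$. Since $f^{-1}(R)=R$ and $c'$ avoids critical values, $f^{-1}(c')$ is a disjoint union of $d$ crosscuts of $R$, each mapped homeomorphically onto $c'$, and together they partition $R$ into finitely many open regions. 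The avoidance condition ensures that a unique region $D$ contains the prime end $z_0$, in the sense that every admissible crosscut representing $z_0$ eventually lies in $D$; hence $\phi(z_{n_k}),\phi(z_{m_k})\in D$ for large $k$. Then $f(\phi(z_{n_k}))$ and $f(\phi(z_{m_k}))$ both lie in the connected set $f(D)\subseteq R\setminus c'$, which is contained in a single component of $R\setminus c'$---contradicting that the two subsequences converge to prime ends on opposite sides of $c'$.

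\textbf{Degree on the boundary and main obstacle.} To identify $\tilde g|_{S^1}$ as a degree $d$ cover, pick $\delta>0$ so that $V=\{1-\delta<|z|<1\}$ contains no critical values of $g$. Riemann--Hurwitz applied to $g\colon g^{-1}(\{|z|\le 1-\delta\})\to\{|z|\le 1-\delta\}$ gives Euler characteristic $d\cdot 1-(d-1)=1$ for the preimage (the total branching $d-1$ is captured in it), so it is a topological disc; consequently $W=g^{-1}(V)$ is a single annulus and $g|_W\colon W\to V$ is a degree $d$ covering. A winding-number computation on $\{|z|=r\}$ as $r\to 1$ then shows $\deg(\tilde g|_{S^1})=d$, while the local-homeomorphism property of $g$ on the collar of $S^1$ persists in the limit, making $\tilde g|_{S^1}$ a covering map. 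Pulling everything back through $\tilde\phi$ yields $\tilde f$ with the claimed properties. The heart of the proof is the crosscut argument of the previous paragraph: the region $D$ must be unambiguously defined, which rests on the generic avoidance condition---the endpoints of $c'$ and of its preimages (a finite set of points in $\partial R$) must avoid the compact impression of $z_0$. Since these are finitely many open-dense conditions, the choice is possible, but its rigorous verification is the technical crux of the theorem.
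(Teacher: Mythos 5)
Your reduction to the unit disc via a Riemann map and the separation-by-crosscuts idea are reasonable, but the step you yourself identify as the crux contains a genuine gap. You require that the endpoints of $c'$ and of the components of $f^{-1}(c')$ avoid the impression of the prime end $z_0$, and you justify this with ``finitely many open-dense conditions''. That is not so: avoiding a fixed compact subset of $\partial R$ is not a generic condition on crosscut endpoints, and the theorem imposes no hypothesis whatsoever on $\partial R$ (the paper stresses this right after the proof), so the impression of $z_0$ may be a large continuum --- it can even be all of $\partial R$ --- in which case every endpoint of every crosscut lies in it and your condition is unsatisfiable. What your argument actually needs is weaker: that $z_0$, as a point of $S^1$, is not a landing point of any of the $d$ arcs $\phi^{-1}(\tilde c_i)$, where the $\tilde c_i$ are the components of $f^{-1}(c')$; only then does a small neighbourhood of $z_0$ in $\overline{\D}$ miss these crosscuts, so that the tail of $(z_n)$ lies in a single complementary region $D$ and the contradiction goes through. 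But the landing prime ends of the preimage crosscuts are dictated by $f$ and $c'$, not freely perturbable; you give no argument that, varying the chord $\alpha$ among admissible choices, one can make these finitely many prime ends all different from $z_0$, and this does not follow from a naive perturbation argument. As written, the main step is not established.

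Two smaller soft spots: (i) Riemann--Hurwitz gives \emph{total} Euler characteristic $1$ for $g^{-1}(\{|z|\le 1-\delta\})$, which alone does not force connectedness (a disc plus an annulus also sum to $1$); you need the extra observation that, by properness, a collar $\{r<|z|<1\}$ lies in $g^{-1}(V)$ and that a component of $g^{-1}(V)$ compactly contained in $\D$ could not map properly onto $V$. (ii) ``the local-homeomorphism property persists in the limit'' is not an argument: a continuous boundary extension of a covering of the collar need not be locally injective on $S^1$, so the covering property of $\tilde g|_{S^1}$ requires proof; the paper obtains it by showing each prime end has exactly $d$ preimages, cutting $R$ along $f^{-1}(\beta)$ for an arc $\beta$ joining $\gamma$ to $\partial R$. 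Note that the paper's route avoids your crux entirely: for each prime end $p$ it chooses a defining sequence of crosscuts $c_n$ so small that $f$ is injective on each (using that $f$ is an unbranched degree~$d$ covering on an annulus neighbourhood $A$ of $\partial R$, together with a diameter bound coming from the curve $\gamma$ enclosing the critical values), and then defines $\tilde f(p)$ as the prime end determined by the admissible sequence $\{f(c_n)\}$; no genericity or avoidance of impressions is ever needed. To salvage your approach you must either prove the weak avoidance statement above or switch to pushing forward the defining crosscuts of $z_0$ as the paper does.
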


\begin{proof}
As the complement of $R$ has more than one point and is connected, we can assume that $R$ is a bounded region of the plane $\R^2$.
By the assumption that $f$ is a degree $d$ branched covering of $R$, it follows that $f$ has $d-1$ critical points 
(counted with multiplicities) in $R$ (Riemann-Hurwitz formula).
Take $\gamma$ a simple closed curve in $R$ such that all critical values lie in $D$, the bounded component of $\R^2\backslash \gamma$.  Then, $f^{-1}(\gamma)$ is a simple closed curve, and 
$f|_A: A\to f(A)$ is a degree $d$ covering map, where $A$ is the annulus bounded by $\partial R$ and $f^{-1} (\gamma)$.
%
Let $p$ denote a prime end. It is claimed first that there exists a sequence $\{c_n\}$ of crosscuts defining $p$ such that $f$ restricted to $c_n$ is injective for each $n$. First note
that the crosscuts can be constructed in such a way that both extreme points have different images, as the critical points in the boundary of $R$ are also branched points. Let $\delta>0$ be 
the diameter of $D$.  If each $c_n$ is small enough then $f(c_n)$ does not intersect $D$ and has diameter less than $\delta$ (use here that $f$ extends continuously to the closure of $R$). 
Let $x$ and $y$ be different points in $c_n$ such that $f(x)=f(y)$. Then the image under $f$ of the segment $\alpha$ in $c_n$ joining $x$ and $y$ cannot be homotopically trivial in 
$f(A)$. But this is absurd by the choice of $\delta$ and the crosscuts $c_n$. This proves the claim.\\
The claim implies immediately that $\{f(c_n)\}$ is a sequence of crosscuts, and thus defines a prime end $\tilde f(p)$. Continuity is obvious by the definition of the topology of 
$\tilde R$ and the continuity of $f$ in the closure of $R$, so it remains to prove the last assertion. Given a prime end $p$ defined by a sequence of crooscuts $\{c_n\}$, let $\beta$ be
a simple arc in $R$ joining $\gamma$ with a point in the boundary of $R$ and such that $\beta\cap c_n=\emptyset$ for all $n$. Then the preimage of $\beta$ under $f$ is the union of $d$ 
simple arcs each one of which joins $f^{-1}(\gamma)$ with a point in the boundary of $R$ and whose interiors are pairwise disjoint (recall that $f|_A: A\to f(A)$ is a covering). Then 
$R\setminus f^{-1}(\beta)$ has exactly $d$ connected components, restricted to each of which the map $f$ is injective and onto $R\setminus\beta$. It follows that each $c_n$ has an 
$f$-preimage in each of these components, so $\tilde f^{-1}(p)$ contains exactly $d$ points.
\end{proof}

Note that no condition was imposed on the boundary of $R$. Note also that $f$ may have critical points in the boundary of $R$, but in any case it is part of the assertion of the 
theorem that the restriction of $\tilde f$ to the boundary of $\tilde R$ (that is homeomorphic to the circle) has no critical points. This is illustrated in the following example.\\

{\bf Example.}
The complex polynomial $f(z)=z^2-2$ satisfies the hypothesis of the theorem above. The region $R$ is the complement of the interval $[-2,2]$ in the real axis. The critical point $0$ belongs to the boundary of $R$. However, each point in the open interval represent two different prime ends while the extreme points $2$ and $-2$ represent only one prime end. The two prime ends whose impression is the critical point $0$ have the same image under the map $\tilde f$. This is the reason why $\tilde f$ has no critical points.

\subsection{Existence of periodic rays.}

Beginning with the proof of Theorem A, let $f$ be a degree $d$ branched covering of the sphere and let $R$ be a completely invariant simply connected region. 
By assumption, the boundary of $R$ is not a unique point, so Theorem 1 holds for $f$. There is no loss of generality in assuming that there is a unique critical point in $R$, 
say $\infty$,  with multiplicity $d-1$.  Indeed, as in the proof of Theorem 1, $f$ is a covering map of degree $d$ from a semi-closed annulus which is a neighbourhood of
$\partial R$ onto its
image, and we may collapse the boundary of this annulus to a point, which will be critical.  As the periodic points will be found on $\partial R$, where the dynamics of $f$ 
remains unchanged,
the proof under this assumption will suffice. 

A ray in $R$ is a simple arc joining $\infty$ with a point in $K$, the boundary of $R$. For each prime end one can choose a ray intersecting every crosscut defining the prime end.  
As we are assuming that $K$ is locally connected, each prime end defines a point in $K$ (see, for example, Chapter 17 in \cite{milnor}). Moreover, if the prime 
end $p$ is $\tilde f^k$- fixed, so is the landing point for $f^k$. Indeed, in this case, if $c_n$ is a sequence of crosscuts defining $p$, then $f^k(c_n)$ is a sequence of crosscuts 
 defining the  same prime end as $c_n$ (see subsection \ref{21}). 

By Lemma 1 in \cite{iprx1}, $\tilde f|_{S^1}$ is semiconjugate to $m_d(z)=z^d$ acting on $S^1$, this means that there exists a continuous degree one map $h:S^1\to S^1$ such that 
$h\tilde f=m_dh$. Moreover it is shown that $h$ is monotonically increasing, meaning that if $\pi:\R\to S^1$ is the universal covering of the circle, then any lift $H$ of $h$ is 
monotonically increasing. Of course $h$ may have intervals where it is constant, but the fact that it has degree one implies the following: if $x$ and $y$ are different points with
the same $\tilde f$-image, then $h(x)\neq h(y)$ (see item (2) after Definition 2 in the above cited reference).

It is easy to find right inverses of $h$ $(h\phi_0=id)$. Of course none of them will be continuous, unless the semiconjugacy $h$ is actually a conjugacy. Choose a right 
inverse $\phi_0$ of $h$ such that $\phi_0m_d=\tilde f\phi_0$ and $\phi_0$ is monotonically increasing.

Now the assumption that $K$ is locally connected will be used to define a map $I:\partial\tilde R\to K$ where $I(p)$ is the impression of $p$, a unique point in $K$. Note that $I$ is continuous, surjective and $fI=I\tilde f$. The map $I$ is not injective since different prime ends may have the same impression in $K$. 

Of course, if two rays $r_1$ and $r_2$ representing different prime ends $x_1$ and $x_2$, land at the same point $y\in K$, then this point separates $K$. Moreover, 
the union of these rays with $y$ separates the whole sphere, and  $I$ sends each component of $S^1\setminus \{x_1,x_2\}$ into the
closure of a component of $K\setminus \{y\}$. It may happen as well, that some point in the interior of an arc from $x_1$ to $x_2$ also has its impression 
at the point $y$ (see Figure 1). 

\begin{figure}[ht]
\psfrag{phi0}{$\phi_0$}
\psfrag{h}{$h$}
\psfrag{i}{$I$}
\psfrag{r}{$R$}
\psfrag{r1}{$r_1$}
\psfrag{r2}{$r_2$}
\psfrag{r3}{$r_3$}
\psfrag{rr}{$\widetilde{R}$}

\psfrag{ff}{$\widetilde{f}$}
\psfrag{md}{$m_d$}

\psfrag{14}{\tiny{$\frac{1}{4}$}}

\psfrag{11}{$\frac{1}{16}$}
\psfrag{4}{\tiny{$4$}}
\begin{center}
{\includegraphics[scale=0.21]{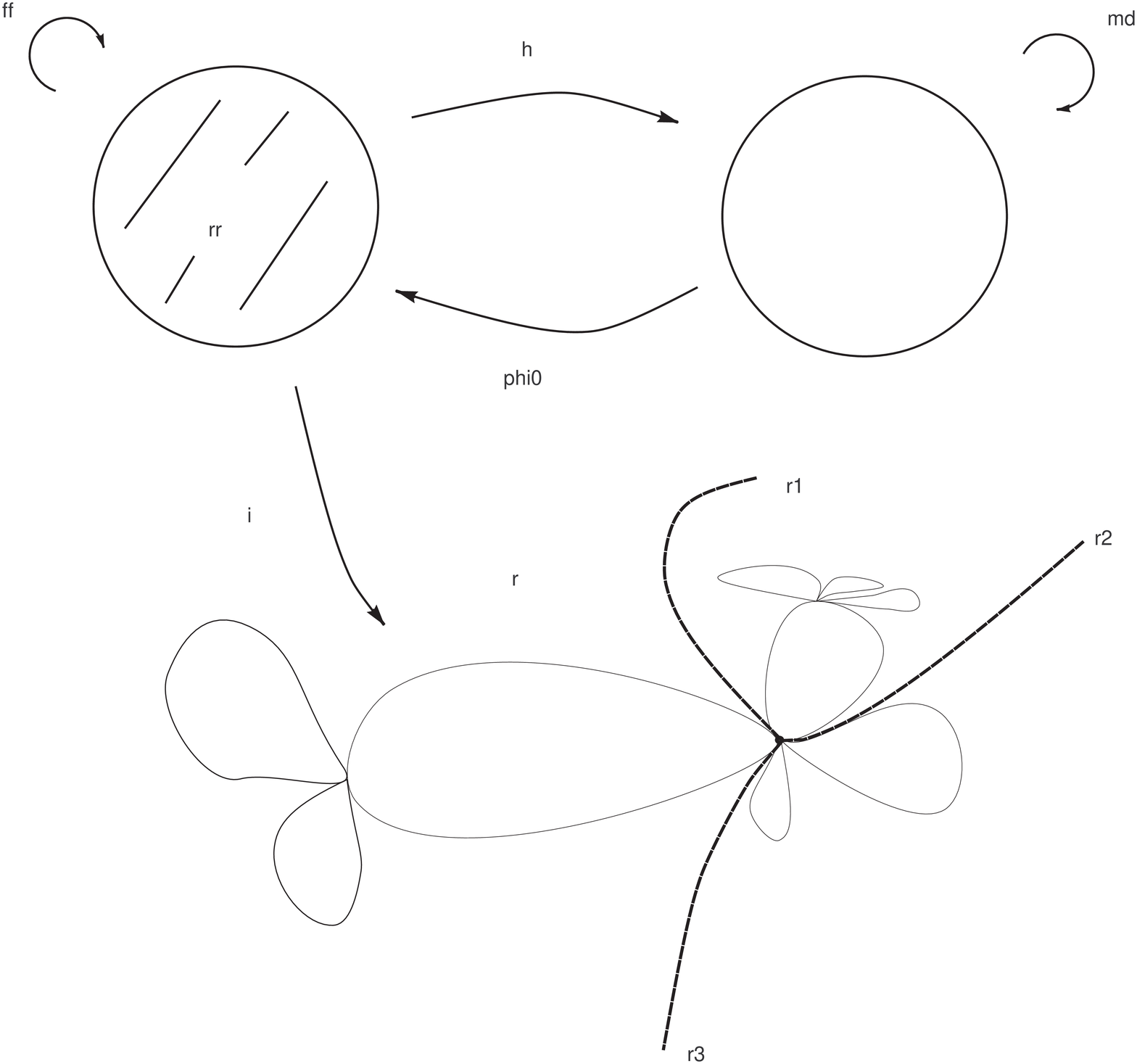}}

\caption{}
\end{center}
\end{figure}

We turn now into the ideas for the proof of Theorem A. Note that $\tilde f$, being a degree $d$ covering map of the circle, has at least $d^n-1$ points with period that divides $n$,
or which is the same, $\tilde f^n$ has at least $d^n-1$ fixed points: this is obvious for $m_d$ and follows for $\tilde f$ because of the semiconjugacy. As was explained earlier, 
to each of the fixed points of $\tilde f^n$ corresponds (taking impressions) a fixed point of $f^n$. However, as many different points may have the same impression, in order to have the rate
one has to control this possibility. Counting will be easier to perform with the map $m_d$ instead of $\tilde f$, so define the map $\phi=I\phi_0$, that satisfies $\phi m_d=f\phi$, and so it carries periodic points of $m_d$ into periodic points of $f$.

\begin{defi}
Two $m_d$-periodic points $x$ and $y$ in $S^1$ are said equivalent if $\phi(x)=\phi(y)$. This will be denoted as $x\sim y$.
\end{defi}

Of course this is an equivalence relation.  Note that $w_1\neq w_2$ and $\phi(w_1)=\phi(w_2)$, then $K\backslash \{\phi(w_1)\}$ is disconnected.

%

\begin{lemma}
\label{l1}
Let $x$ and $y$ be different points in the boundary of a component of the complement of the closure of $R$ such that $f(x)=f(y)$. 
Then there exist points $x'$ and $y'$ in $S^1$ having the same image under $m_d$, such that $x'\in h(I^{-1}(x))$, $y'\in h(I^{-1}(y))$ and the following property holds:\\
$\phi(w_1)=\phi(w_2)$ implies that either $w_1$ and $w_2$ belong to the same component of $S^1\setminus\{x',y'\}$, or 
$\phi(w_1)=\phi(w_2)=\phi(x')$ or $\phi(w_1)=\phi(w_2)=\phi(y')$.\\  
\end{lemma}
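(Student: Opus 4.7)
The strategy is to pick two prime ends $p_x,p_y$ landing at $x,y$ which share a common $\tilde f$-image, and set $x'=h(p_x),\,y'=h(p_y)$; the semiconjugacy $h\tilde f=m_d h$ then automatically yields $m_d(x')=m_d(y')$. The separation statement will follow from a Jordan curve argument in $S^2$, using the component $C$ of $S^2\setminus\overline R$ containing $x,y$ in its boundary to close up rays representing $p_x,p_y$ into a simple loop.

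To produce $p_x,p_y$ I would work backwards from $z:=f(x)=f(y)$. Start with a sequence of small crosscuts $\beta_n$ in $R$ defining a prime end $q$ at $z$. Since $f$ restricted to an annular neighbourhood of $\partial R$ is a $d$-to-one covering (as in the proof of Theorem~\ref{t2}), for $n$ large the preimage $f^{-1}(\beta_n)$ splits into $d$ disjoint arcs; among these, one crosscut $c_n^x$ has its endpoints converging to $x$ and another $c_n^y$ has its endpoints converging to $y$. By local connectedness of $K$, the sequences $\{c_n^x\}$ and $\{c_n^y\}$ define prime ends $p_x,p_y$ with $I(p_x)=x,\,I(p_y)=y$, and by construction $\tilde f(p_x)=\tilde f(p_y)=q$.

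For the separation claim, pick a simple arc $\alpha\subset\overline C$ joining $x$ to $y$ with interior in $C$ (available since $C$ is open and connected), and rays $r_x,r_y$ from $\infty$ representing $p_x,p_y$, disjoint apart from $\infty$. Arranging $c_n^x,c_n^y$ to sit on the $C$-facing side of $\partial R$ ensures $\Gamma:=r_x\cup\alpha\cup r_y$ is a Jordan curve in $S^2$ meeting $K$ only at $\{x,y\}$, so $S^2\setminus\Gamma$ has two components $D_1,D_2$. The two arcs of $S^1\setminus\{x',y'\}$ map under $\phi_0$ into the two arcs of $\partial\tilde R\setminus\{p_x,p_y\}$, whose representative crosscuts lie in $R\cap D_1$ or $R\cap D_2$ respectively; hence their impressions under $\phi=I\phi_0$ lie in $K\cap\overline{D_1}$ or $K\cap\overline{D_2}$. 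Since $(K\cap\overline{D_1})\cap(K\cap\overline{D_2})=K\cap\Gamma=\{x,y\}$, the common value $\phi(w_1)=\phi(w_2)$ for $w_1,w_2$ in distinct arcs must be $x$ or $y$, which coincide with $\phi(x'),\phi(y')$ as long as $\phi_0$ selects $p_x,p_y$ at $x',y'$.

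The main obstacle is the construction in step two: one must verify that the $d$ preimage crosscuts of $\beta_n$ really do split so that exactly one approaches each preimage of $z$ on $\partial R$, handling the ramified case when $x$ or $y$ is itself a critical point of $f$ in $\partial R$, and that the chosen $c_n^x,c_n^y$ can be arranged on the $C$-facing side of $\partial R$ near $x,y$, so that $\Gamma$ closes up without self-intersections. A more subtle secondary point is to reconcile the fixed right inverse $\phi_0$ with the prime ends produced, so that $\phi(x'),\phi(y')$ actually coincide with $x,y$ rather than with impressions of other prime ends in $h^{-1}(x')\cup h^{-1}(y')$.
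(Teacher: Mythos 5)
Your proposal is correct and follows the same route as the paper's proof: pick rays (or prime ends) landing at $x$ and $y$ with a common $\tilde f$-image, apply $h$ to get $x',y'$, close up the rays with an arc through the component of $S^2\setminus\overline R$ to obtain a Jordan curve meeting $K$ only at $\{x,y\}$, then use monotonicity of $h$ and $\phi_0$ to place preimages of the two arcs of $S^1\setminus\{x',y'\}$ on opposite sides of that curve. The one cosmetic difference is that you build $p_x,p_y$ by pulling back small crosscuts at $z$; the paper simply takes a single ray landing at $z=f(x)$ (which exists by local connectedness) and lifts it to rays $r_x,r_y$ landing at $x$ and $y$, so your ``main obstacle'' of tracking which preimage crosscut approaches which preimage of $z$ can be dispensed with. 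Your secondary worry --- that $\phi(x')$ need not literally equal $x$ because $\phi_0$ is fixed in advance and may select a different prime end in $h^{-1}(x')$ --- is a legitimate observation, but it is an imprecision present in the lemma statement itself rather than in your argument; the paper's proof, like yours, concludes only that the common value is $x$ or $y$, and this is how the lemma is used later.
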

\begin{proof}
Let $r$ be a ray landing at $z=f(x)$ and $r_x$, $r_y$ rays landing at $x$ and $y$ respectively such that $f(r_x)=f(r_y)=r$. Note that $r_x$ and $r_y$ are different as points in $\partial\tilde R$, and will be denoted by $x_0$ and $y_0$ as points in $S^1$. Note that $\tilde f(x_0)=\tilde f(y_0)$, which implies, as was explained above, that $h(x_0)\neq h(y_0)$. Let $x'=h(x_0)$ and $y'=h(y_0)$.

The assumptions on $x$ and $y$ imply that there exists a simple arc $s$ joining $x$ and $y$ in the sphere with the property that the interior of $s$ does not intersect the closure of $R$.
It follows that $S^2\setminus\{s\cup r_x\cup r_y\}$ has exactly two connected components, and that in both components there are points of $K$. Note that as $h$ and $\phi _0$ are monotonic, if $w_1$ and $w_2$ are points in different components of $S^1\setminus \{x',y'\}$, then $\phi_0(w_1)$ and $\phi_0(w_2)$ belong to different components of $S^1\setminus \{x_0,y_0\}$. For $i=1,2$, let $r_i$ be rays corresponding to $\phi_0(w_i)$ not intersecting $r_x$ nor $r_y$ within $R$. The interior of the rays $r_1$ and $r_2$ must belong belong to different components of $S^2\setminus (s\cup r_x\cup r_y)$. As $\phi(w_1)=\phi(w_2)$ implies that $r_1$ and $r_2$ land at the same point in $K$, this point is necessarily $x$ or $y$.
\end{proof}

\section{Proof of Theorem A.}

The fundamental idea is the following: the map $m_d$ acting on $S^1$ has the rate: indeed, $m_d^n$ has $d^n-1$ fixed points. Moreover, the image under $\phi$ of a $m_d$-periodic point
is $f$-periodic. The lemma proved above shows that the points $x'$ and $y'$ obtained separate the circle in such a way that (almost always) a point in one component cannot be identified 
by $\phi$ with a point in another component. As $x'$ and $y'$ have the same image under $m_d$, we will construct in an abstract setting maximal sets of points separating the circle 
in pieces such that points in different pieces cannot be identified. The next subsection is devoted to this. 

\subsection{Stars.}

The procedure begins with some abstract definitions and properties; in the next subsection the construction is realized for the map $f$.
Throughout the following, the circle is considered with the distance $dist$: arc lenght divided by $2\pi$. So the circle has length equal to $1$. Then two different points having the same image under $m_d$ are at a distance $j/d$ for some integer $0<j<d$. 

\begin{defi}
Let $d$ be an integer greater than one.
A $d$-star (or simply a star when no confusion can arise) is a subset $E$ of $S^1$ containing at least two points and such that the distance between any two points in $E$ is equal to $j/d$ for some integer $j$, $1\leq j \leq d-1$. The multiplicity of a star is $m(E)=n-1>0$ if $E$ has $n$ points. Two stars $E_1$ and $E_2$ are disjoint if $E_2$ is contained in the closure of a component of the complement of $E_1$ (which obviously implies that $E_1$ is as well contained in the closure of a component of the complement of $E_2$). In other words, $E_1$ and $E_2$ are disjoint if at most one component of $S^1\setminus E_1$ intersects $E_2$.
A cycle of stars is a sequence of pairwise disjoint stars $\{E_1,\ldots,E_k\}$ such that there exists points $x_i : 1\leq i\leq k$ and $x_i\in E_i\cap E_{i+1}$ for $i<k$ and 
$x_k\in E_k\cap E_1$. 
A set ${\mathcal E}=\{E_1,\ldots,E_k\}$ is a maximal set of $d$-stars if every $E_i$ is a $d$-star, the $E_i$ are pairwise disjoint, there are no cycles in ${\mathcal E}$ and it is maximal respect to these properties.
\end{defi}

For example, if $d=4$, $E_1= \{1,i\}, E_2= \{i,-1\},E_3= \{-1,-i\}, E_4=\{1,-1\}, E_5=\{\-i,1\}$, then $\{E_1, E_2, E_3\}$ is a maximal set of stars,  $\{E_1, E_2, E_3, E_5\}$ is 
a cycle of stars and $\{E_3, E_4\}$ is a set of 
disjoint 
stars which is not maximal (Figure 1, (a),(b) and (c) respectively). We have also drawn a maximal set of stars as well as a cycle of stars for $d=6$ in Figure 2.
 
\begin{figure}[ht]

\begin{center}
\subfigure[]{\includegraphics[scale=0.21]{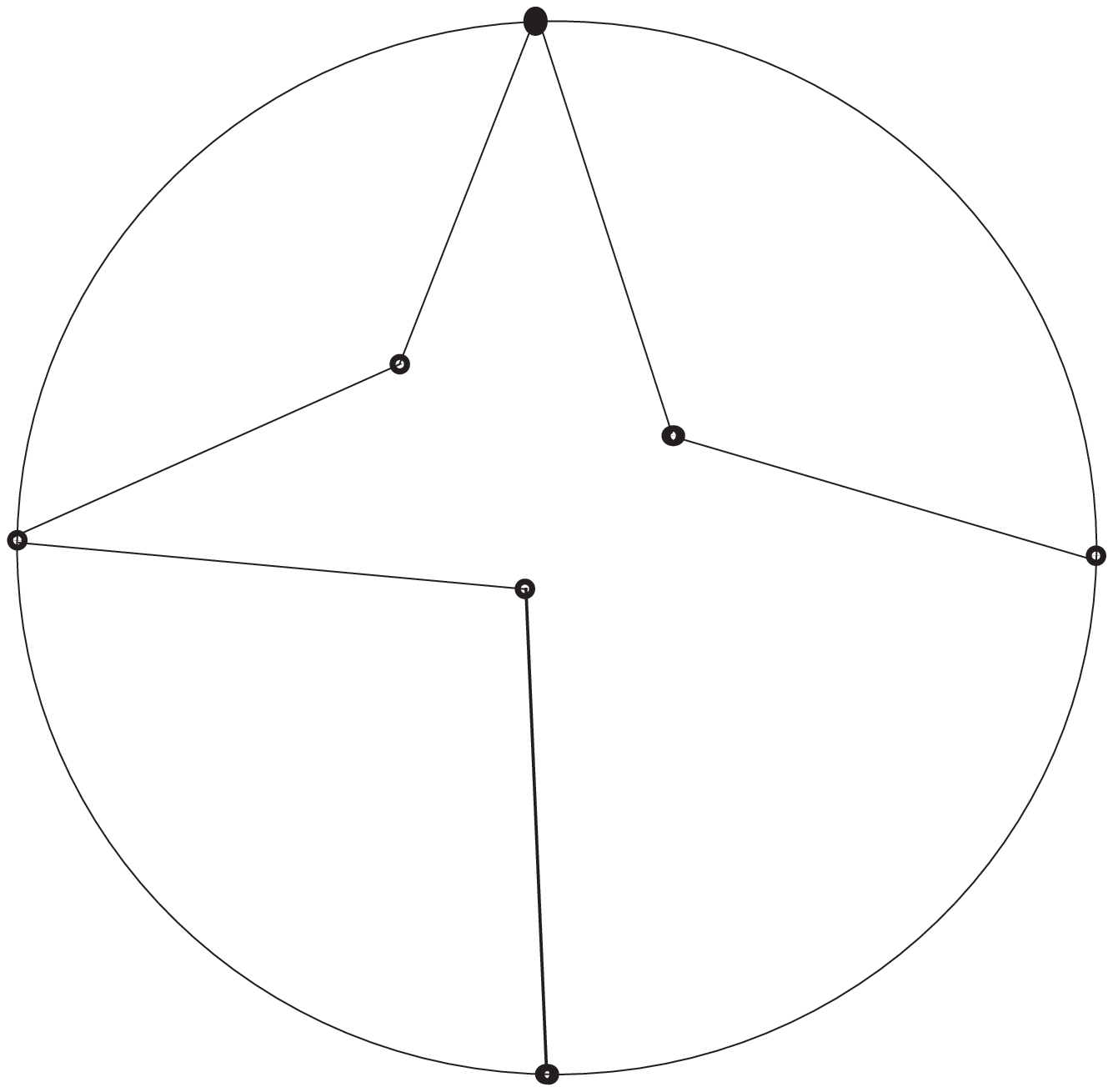}}
\subfigure[]{\includegraphics[scale=0.21]{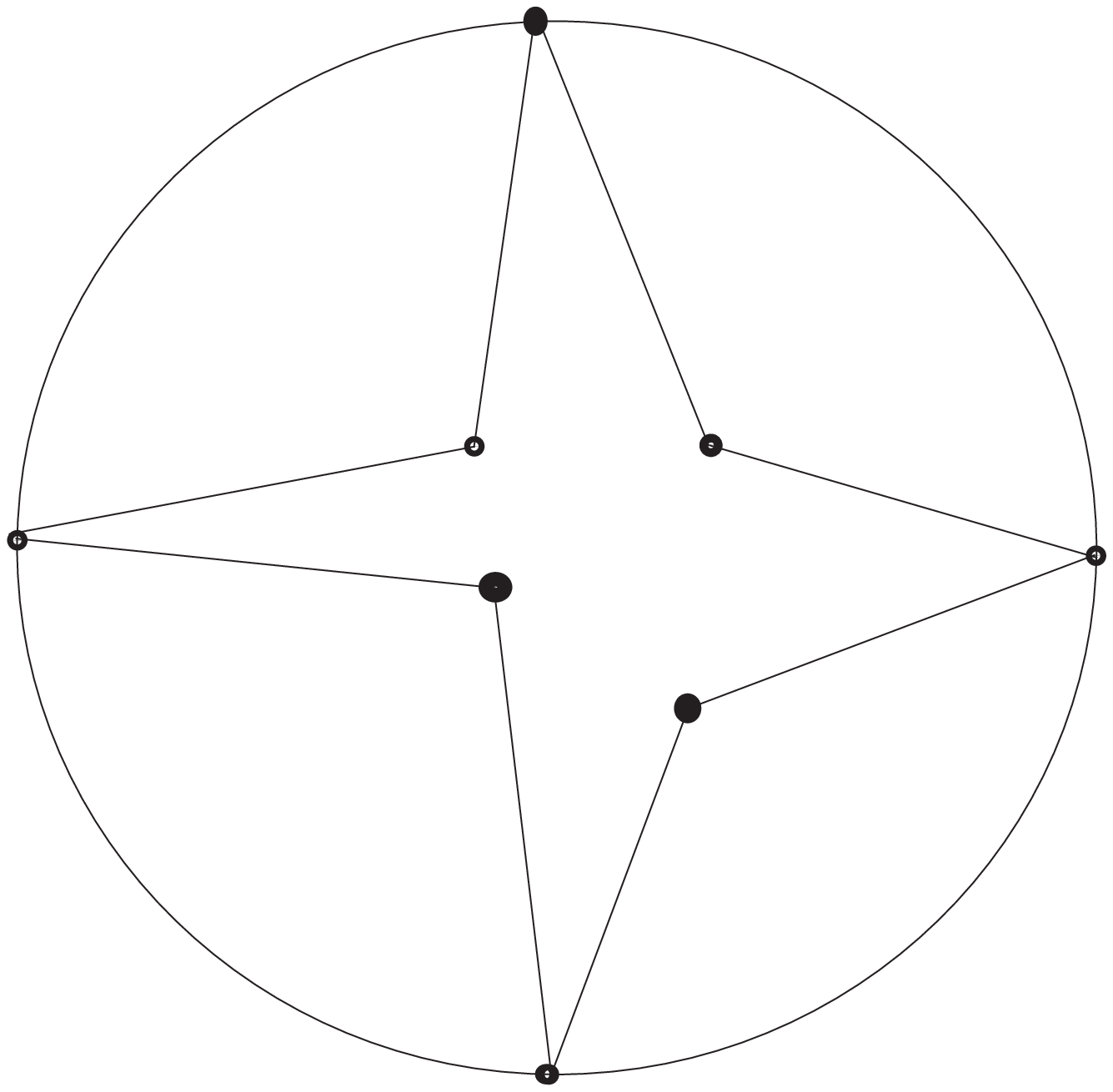}}
\subfigure[]{\includegraphics[scale=0.21]{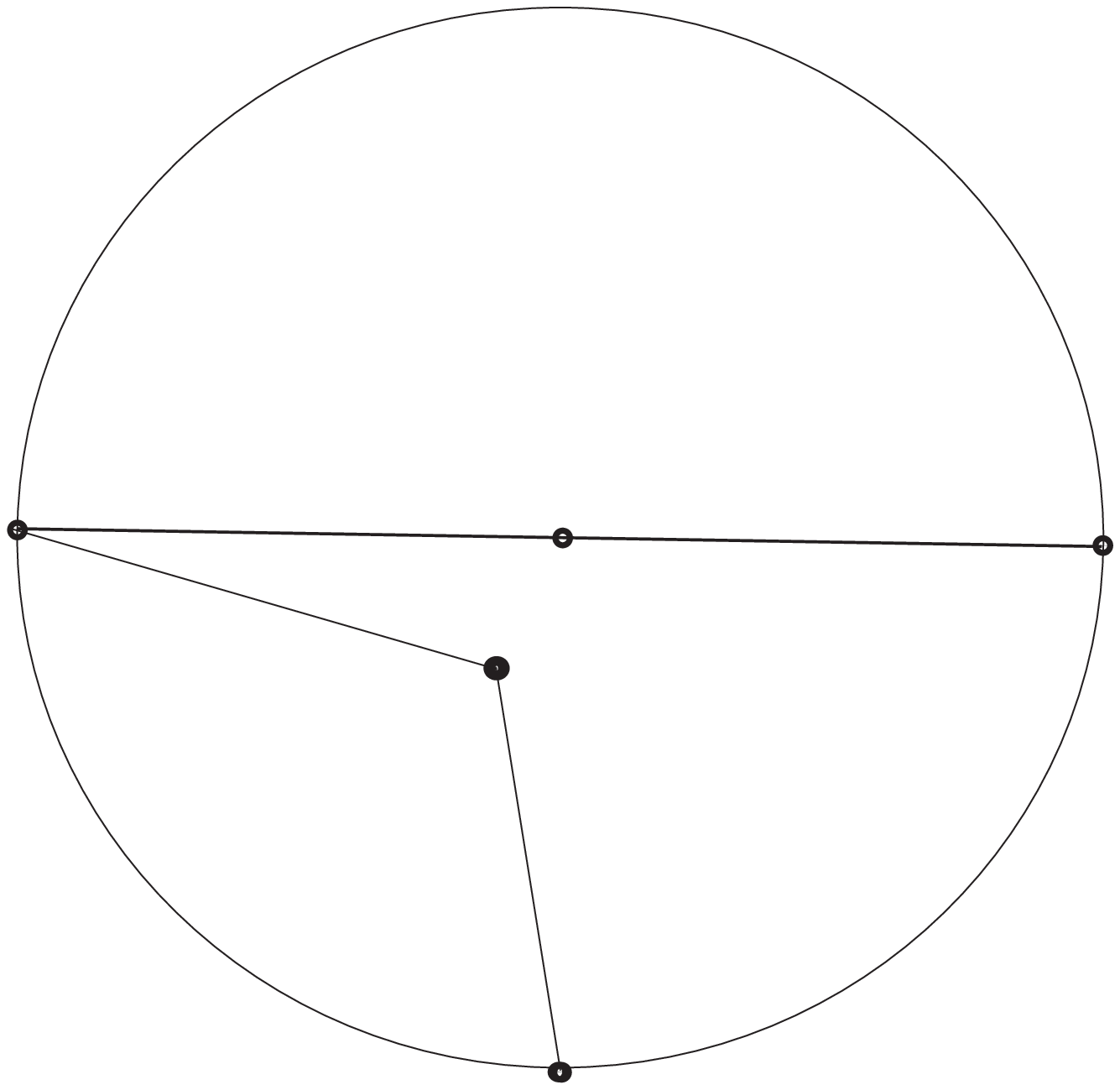}}

\caption{Some stars for $d=4$}\label{figura1}
\end{center}
\end{figure}

\begin{figure}[ht]

\begin{center}
\subfigure[]{\includegraphics[scale=0.21]{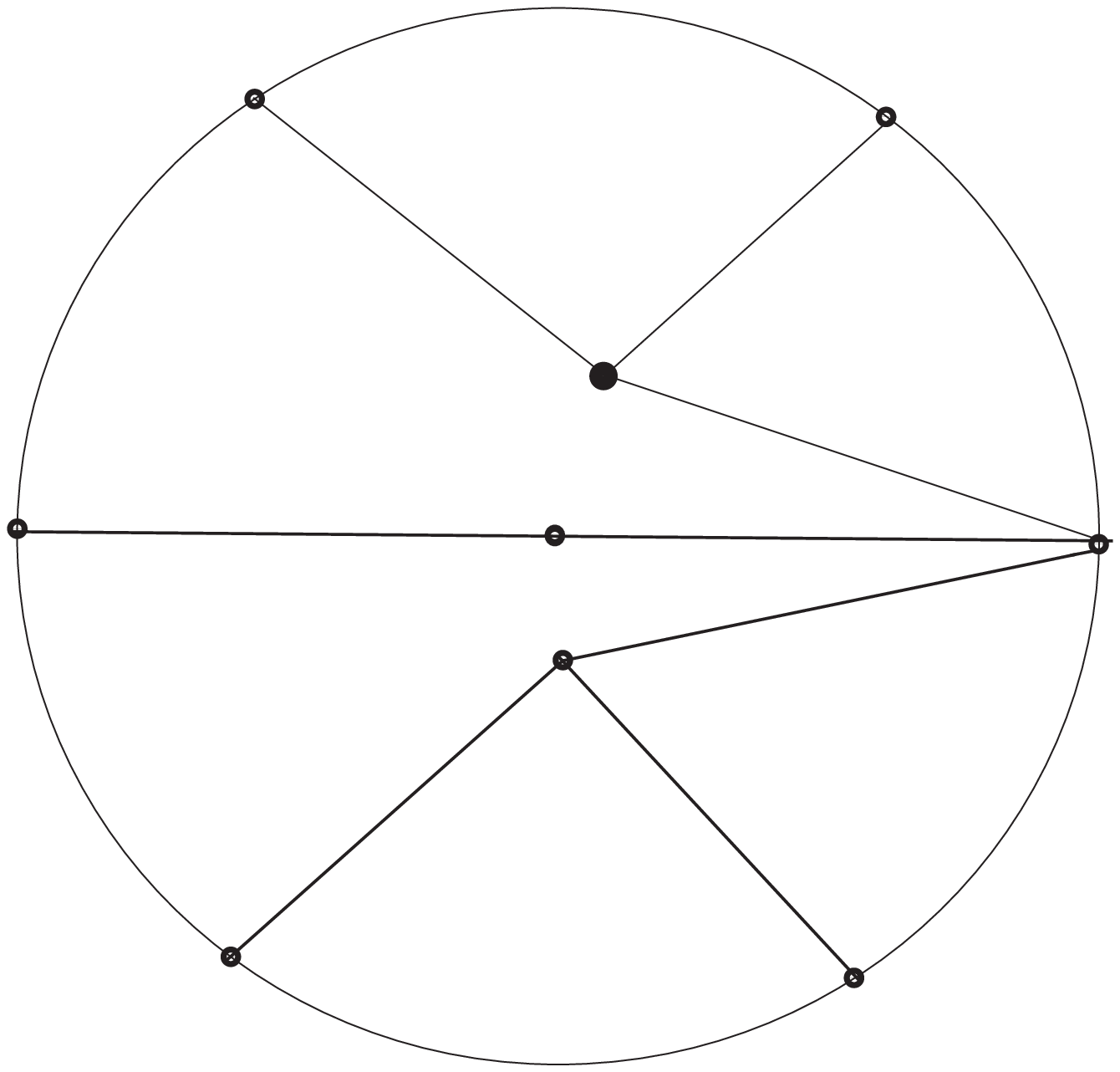}}
\subfigure[]{\includegraphics[scale=0.21]{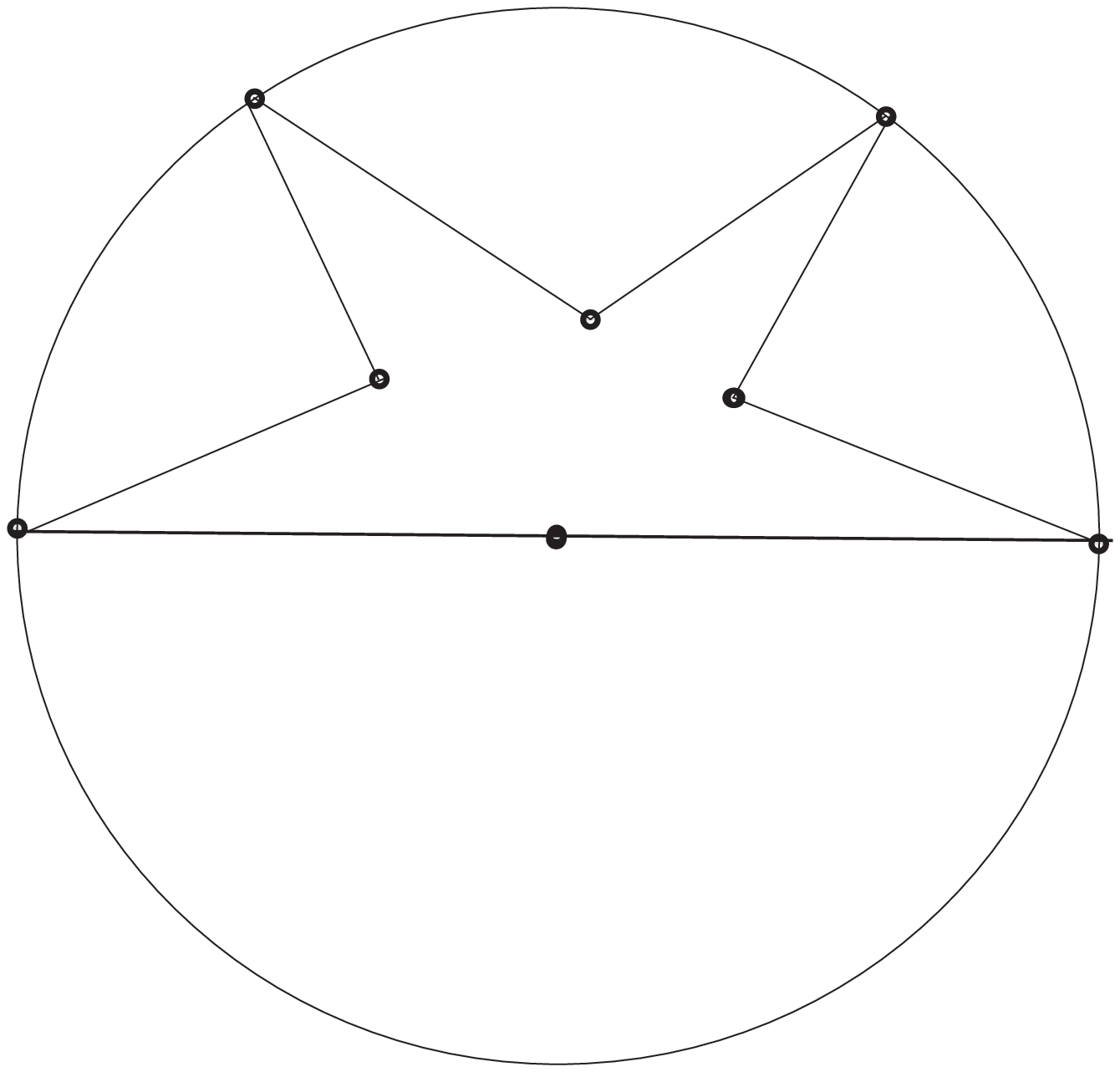}}

\caption{Some stars for $d=6$}\label{figura2}
\end{center}
\end{figure}

In our context each critical point will give rise to a star and the set of stars so defined will be maximal.

\subsection{Properties of stars.}

The main result of this subsection is:

\begin{prop}
\label{p2} Assume that $\mathcal E= \{E_j\}_{j=1}^k$ is a set of disjoint $d$-stars with no cycles.\\
Then $\mathcal E$ is maximal if and only if the sum of the multiplicities of the $E_i$ is equal to $d-1$.
\end{prop}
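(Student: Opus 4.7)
The plan is to attach to $\mathcal{E}$ a planar dissection of the closed disc $\overline{\D}$ and count its faces. Draw the convex polygons $P_{E_i}=\mathrm{conv}(E_i)$ together with the arcs of $S^1\setminus V$, where $V=\bigcup_i E_i$; this produces a connected planar graph whose faces are the polygon interiors (one per star with $|E_i|\geq 3$), the \emph{complementary faces} inside $\overline{\D}$, and the exterior face. A direct application of Euler's formula yields
\[
F_c \;=\; 1 \,+\, \sum_i m(E_i) \;=\; 1 + M,
\]
where $F_c$ is the number of complementary faces. This identity holds regardless of cycles; the no-cycle hypothesis intervenes only to ensure that $\bigcup_i P_{E_i}$ has contractible components, so that every complementary face meets $S^1$ (a cycle of polygons would enclose an ``inner'' face disjoint from $S^1$).

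For each complementary face $F$ let $L_F$ be the total length of $\partial F\cap S^1$. Orient $\partial F$ counterclockwise about $F$ and define $\Phi\colon\partial F\to S^1$ by $\Phi=m_d$ on boundary arcs and $\Phi\equiv m_d(E_i)$ on each boundary chord coming from $E_i$; continuity at the shared vertices is automatic since $m_d$ is constant on each $E_i$. Because the arcs are traversed counterclockwise on $S^1$, $\Phi$ is non-decreasing along the loop; chords contribute nothing to the winding while each arc of length $\ell$ contributes $d\ell$, so the degree of $\Phi$ equals $d\cdot L_F$. Since $F$ meets $S^1$, this degree is a positive integer, giving $L_F=\ell_F/d$ with an integer $\ell_F\geq 1$. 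Summing $\sum_F L_F=1$ yields $\sum_F\ell_F=d$, and combined with $F_c=M+1$ this gives $M+1\leq d$, with equality exactly when $\ell_F=1$ for every $F$.

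It remains to identify ``every $\ell_F=1$'' with maximality. If some $\ell_F\geq 2$, choose $z\in S^1$ outside the finite set $m_d(V)$; by the degree of $\Phi$ there are two distinct arc-interior preimages $p,q\in\bar F\cap S^1$. The key geometric input is that every chord of every existing polygon separates $\overline{\D}$ into two closed half-discs; since $F$ is connected and disjoint from that chord, $\bar F$ lies entirely in one of them, and therefore the straight segment $\overline{pq}$ lies there too. Consequently $\overline{pq}$ crosses no chord of any $P_{E_j}$ and, as $p,q\notin V$, avoids every polygon interior, which translates into star-disjointness of $\{p,q\}$ with each $E_j$; since $p,q$ are new vertices no cycle is created, so $\{p,q\}$ can be added, contradicting maximality. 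Conversely, assume every $\ell_F=1$ and suppose a new star $E\subset\bar F\cap S^1$ could be added; then $|E|\geq 2$ forces the value $z=m_d(E)$ to have at least two preimages in $\bar F\cap S^1$. Because $\Phi$ is non-decreasing on $\partial F$ with total winding $1$, the preimage of any value is either a single arc-interior point (only one $S^1$-preimage, excluded) or a maximal block of consecutive boundary chords on which $\Phi\equiv z$, whose $S^1$-trace is a chain of chord endpoints linked in the incidence graph by the stars containing those chords. Then $E$ would share at least two vertices with this chain, closing a cycle together with the chord-path and contradicting the no-cycle hypothesis for $\mathcal{E}\cup\{E\}$. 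The hardest step to execute rigorously is the half-disc argument ensuring star-disjointness of $\{p,q\}$ with every $E_j$; once this is secured, the Euler count and the winding computation combine cleanly to give the proposition.
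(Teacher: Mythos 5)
Your approach is genuinely different from the paper's, and its core is correct. The paper proves Proposition~\ref{p2} by induction on $d$: it fixes a star $E_1=\{x_1,\ldots,x_k\}$, collapses each complementary arc $(x_i,x_{i+1})$ to a circle rescaled to length $1$, shows (Lemma~\ref{cociente}) that maximality passes to the quotient families of $\ell$-stars with $\ell=d\cdot\mathrm{dist}(x_i,x_{i+1})$, and sums $(k-1)+\sum_i\bigl(d\cdot\mathrm{dist}(x_i,x_{i+1})-1\bigr)=d-1$. You instead make one global count: the hulls $P_{E_i}$ together with $S^1$ form a planar graph with $F_c=1+M$ complementary faces (your Euler count is right, using that disjointness plus no cycles forces distinct hulls to meet in at most one common vertex, and the presence of the whole circle makes the graph $2$-connected so each $\partial F$ is a simple closed curve), and the monotone loop $\Phi$ shows each complementary face carries arc length $\ell_F/d$ with $\ell_F\geq 1$ an integer, positivity coming from the fact that a face bounded by chords alone would produce a cycle of stars. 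This buys something the paper's induction never isolates: the inequality $M\leq d-1$ for \emph{every} disjoint cycle-free family, together with the geometric characterization that maximality means every face carries exactly arc length $1/d$. Your forward direction is sound: when $\ell_F\geq 2$, the two arc-interior preimages $p,q$ of a generic value give a new two-point star, disjoint from each $E_j$ by the per-chord convexity argument, and creating no cycle since $p,q\notin\bigcup_jE_j$.

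The one genuine gap is in the converse. You posit that a hypothetical new star $E$ satisfies $E\subset\bar F\cap S^1$ for a \emph{single} complementary face $F$, but nothing in the hypotheses grants this: a star disjoint from every $E_j$ may contain vertices of $\bigcup_jE_j$, and the claim that (at least two of) its points lie on the boundary of one common face needs proof. It is in fact true, but the argument is not free: one shows that for $u,v\in E$ the open segment $\overline{uv}$ avoids every $P_{E_j}$, using disjointness when $u,v\notin E_j$, using that a shared vertex must be an endpoint of the arc of $S^1\setminus E_j$ containing $E$ when exactly one of $u,v$ lies in $E_j$, and invoking the no-cycle hypothesis to exclude $|E\cap E_j|\geq 2$. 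Fortunately the entire chord-chain discussion can be discarded: your first computation already gives $M'\leq d-1$ for any disjoint cycle-free family, so if $\mathcal E$ with $M=d-1$ admitted an additional star, the enlarged family would be disjoint and cycle-free with multiplicity sum at least $d$, a contradiction. With that one-line replacement your proof is complete, and arguably more transparent than the paper's induction.
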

\begin{proof}
The proof will be done by induction on $d$. The case $d=2$ is trivial. Also the case where there is only one star in the set.
To reduce $d$ and use the induction hypothesis we will use the following. Assume that $\mathcal E=\{E_1,\ldots,E_k\}$ is a set of pairwise disjoint $d$-stars without cycles. Let $x_1$
and $x_2$ be consecutive points in $E_1$, meaning that there is an arc $s$ from $x_1$ to $x_2$ which does not intersect $E_1\setminus\{x_1,x_2\}$. A circle $\tilde s$ is obtained 
from $s$ when the points $x_1$ and $x_2$ are identified. As in the definition of stars the circle has length equal to one, then the quotient $\tilde s$ has to be rescaled with the 
constant $\ell/d$, (where $\ell/d=dist(x_1,x_2)$) to obtain a circle with length equal to $1$. So a $d$-star $E$ in $\mathcal E$ contained in $S^1$ can be also considered as
a $\ell$-star $E'$ contained in $\tilde s$. Having these considerations in mind define $\mathcal E'$ as the set of $E'_i$ such that $E_i$ belongs to $\mathcal E$ and is contained in 
the arc $s$. Then $\mathcal E'$ is a set of disjoint $\ell$-stars without cycles.

\begin{lemma}
\label{cociente}
If $\mathcal E$ is maximal then $\mathcal E'$ is a maximal.
\end{lemma}
\begin{proof}
To prove maximality of $\mathcal E'$, define for $i=1,2$ numbers $\delta_i$, to be the distance between $x_i$ and the union of $E'_1\cup\cdots\cup E'_r$. Note that the
sum $\delta_1+\delta_2$ is less than or equal to $1/d$, otherwise the set $\mathcal E$ was not maximal: indeed, if that sum is greater than $1/d$ then one can add a new star 
(having two points) to $\mathcal E$ to obtain a new set of disjoint $d$-stars without cycles and this contradicts the maximality of $\mathcal E$; and if it is equal to $1/d$ 
then one can add a disjoint new disjoint star in $\mathcal E'$, but losing the no cycles property. Therefore, the set $\mathcal E'$ in $\tilde s$ is maximal.
\end{proof}

Following with the proof of the Proposition, note that the same can be done in each connected component of $S^1\setminus E_1$, where $E_1=\{x_1,\ldots,x_k\}$ is cyclically ordered. The arc $s_i$ is $(x_i,x_{i+1})$ ($i$ is taken to range over $1,\ldots,k$ where $x_{k+1}=x_1$). Each arc induces a circle $\tilde s_i$ and the set of stars 
in $\mathcal E$ that are contained in $s_i$ is denoted $\mathcal E'_i$. By the lemma each $\mathcal E'_i$ is maximal if $\mathcal E$ is maximal. The converse also holds, and is trivial since the stars are disjoint.

Assume the assertion of the proposition true for every $\ell<d$, this means that each $\tilde E'_i$ is maximal if and only if the sum of its multiplicities is equal to $d. dist (x_i,x_{i+1})-1$. Next note that the multiplicity of $E_1$ is $k-1$, so every $\mathcal E'_i$ is maximal if and only the sum of the multiplicities of all the stars is equal to
$$
(k-1)+\sum_1^k (d.dist(x_i,x_{i+1})-1)=(k-1)+(d-k),
$$
since the sum of the the distances $dist(x_i,x_{i+1})$ equals to one.
\end{proof}

\subsection{Construction of stars.}

Let $c$ be a critical point of multiplicity $j$ contained in $K$; take a ray $r$ landing at $f(c)$ and note that there are exactly $j+1$ rays preimage of $r$ landing at $c$. The 
images under $h$ of the prime ends corresponding to these $j+1$ rays is a $d$-star.\\
Let $R_1$ be a component of the complement of the closure of $R$ containing $k$ critical points counted with multiplicities. Choose any point $z$ that is neither periodic nor a 
critical value, in the boundary of $f(R_1)$ and take a ray $r$ in $R$ landing at $z$. Then $f^{-1}(r)$ contains $k+1$ rays landing at 
different points of the boundary of $R_1$, denoted 
$r_1,\ldots,r_{k+1}$. 
Let $E_1=\{h(r_1),\ldots,h(r_{k+1})\}$ (here we are identifying rays and their corresponding prime ends). As $\tilde f(r_i)=\tilde f(r_j)$ it follows that $h(r_i)\neq h(r_j)$ 
whenever $j\neq i$ (see item (2) after 
Definition 2 in \cite{iprx1}) . 
Then $E_1$ has $k+1$ different points, and all of them have the same image under $m_d$: it follows that $E_1$ is a $d$- star of multiplicity $k$.

%
So, with this proceeding, we obtain a star for each critical point in $K$ and for each component of the complement of $\overline R$ containing critical points.\\

\begin{lemma}\label{max}
The union of the stars constructed above is a maximal set.
\end{lemma}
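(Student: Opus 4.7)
By Proposition \ref{p2}, $\mathcal E$ is maximal if and only if the stars are pairwise disjoint, there are no cycles, and their multiplicities sum to $d-1$. I would verify the three conditions in turn. The multiplicity count is the easy part: Riemann--Hurwitz gives total critical multiplicity $2d-2$ on $S^2$; by the reduction at the start of the section the unique critical point of $R$, namely $\infty$, accounts for $d-1$; the remaining $d-1$ is split between critical points of $K$ and critical points lying in the components of $S^2\setminus\overline R$, each of which contributes a star of matching multiplicity by construction.

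\textbf{Choice of base rays; absence of cycles.} Before tackling the other two conditions, I would arrange the base rays $r^{(i)}$ so that (a) their interiors are pairwise disjoint in $R$ (possible since $R$ is simply connected and there is freedom in the landing points for the component stars and in the rays themselves), and (b) the landing prime ends $p^{(i)}$ of the various $r^{(i)}$ have pairwise distinct images under $h$ in $S^1$ (generic, since $h$ is monotone and varies continuously with the choice of ray). Condition (b) forces each $E_i$ to sit inside the single $m_d$-fiber $m_d^{-1}(h(p^{(i)}))$: indeed every $T_i$-arc maps under $f$ to $r^{(i)}$, so its prime end maps under $\tilde f$ to $p^{(i)}$ and hence under $m_d\circ h=h\circ\tilde f$ to $h(p^{(i)})$. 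Distinct $h(p^{(i)})$ place distinct stars in disjoint fibers, so the stars are set-theoretically disjoint on $S^1$ and no cycle can exist.

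\textbf{Pairwise disjointness, and main obstacle.} This is the core of the argument. With (a), the preimage trees $T_i:=f^{-1}(r^{(i)})\cup\{\infty\}$ are planar fans from $\infty$ meeting only at $\infty$, so every $T_j$-arc is confined to a single component of $R\setminus T_i$. By monotonicity of $h$ it suffices to show non-interleaving on $\partial\tilde R$: the $j_j+1$ prime ends of $E_j$ must lie in a single arc of $\partial\tilde R\setminus E_i$. The local model $z\mapsto z^{j_i+1}$ of $f$ at the boundary critical point $c_i$ forces the $j_i+1$ rays of $E_i$ to reach $c_i$ through $j_i+1$ consecutive sheets, so their landing prime ends are consecutive among the landing prime ends of $T_i$; since $c_j\neq c_i$ lies in the closure of sectors of $R\setminus T_i$ bordering a single arc of $K$ cut off by consecutive $E_i$-prime-ends, every $T_j$-arc landing at $c_j$ sits in that same arc of $\partial\tilde R\setminus E_i$. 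Together with the multiplicity count this is exactly what Proposition \ref{p2} needs in order to conclude that $\mathcal E$ is maximal. The hard step is the consecutivity/trapping claim: it rests on the planar combinatorics of the preimage trees in $R$ and on how sectors of $R\setminus T_i$ meet $K$ at $c_i$ and $c_j$, and may well require separation arguments in the spirit of Lemma \ref{l1} to handle the case in which $K$ has many prime ends at $c_i$ corresponding to distinct approach directions.
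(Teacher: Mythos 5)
Your reduction to Proposition~\ref{p2} (pairwise disjointness, no cycles, multiplicities summing to $d-1$) is the same as the paper's, and the multiplicity count via Riemann--Hurwitz matches. The disjointness (non-interleaving) part is also in the same spirit as the paper's terse ``by construction and because $h$ is monotonic.'' Where you genuinely diverge is the treatment of cycles, and this is where the gap lies.

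You propose to achieve set-theoretic disjointness of the stars by choosing base rays whose landing prime ends have pairwise distinct $h$-images, calling this ``generic.'' But for a critical point $c\in K$ the base ray is \emph{forced} to land at $f(c)$, so the prime end of the base ray is constrained to be an access to $f(c)$; if $f(c)$ has a single prime end, there is no freedom at all. Moreover, even when the base prime ends $p^{(i)}$ are distinct, $h$ is only monotone of degree one (not injective), so nothing prevents $h(p^{(i)})=h(p^{(j)})$ for $i\neq j$, nor prevents $h$ from identifying a preimage prime end of $p^{(i)}$ with one of $p^{(j)}$: the paper's injectivity statement (item (2) after Definition~2 in \cite{iprx1}) only applies to distinct prime ends in the \emph{same} $\tilde f$-fiber. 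So your condition (b) can fail, and with it your no-cycle argument. This is not an accident: the paper's definition of ``cycle'' explicitly allows stars to share points, and the paper's proof assumes a cycle exists and derives a contradiction from the fact that $\tilde f$ is a degree-$d$ covering of $S^1$ (properties (1)--(4) of the rays $r_i,s_i$, concluding $\tilde f(I)=S^1$ against $h\tilde f = m_d h$ and the finiteness of $h(I)$). That global covering-degree argument is the real content of the no-cycles step, and your proposal replaces it with a perturbation claim that cannot be guaranteed. You also flag the consecutivity/trapping claim in the disjointness step as a ``hard step'' without resolving it; the paper's one-line disjointness argument is resting on the same geometric separation picture, so your treatment of that part is comparably incomplete but does not introduce new error.
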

\begin{proof}
Recall that there are exactly $d-1$ critical points in the complement of $R$ counted with multiplicities. It follows that the sum of the multiplicities of the stars constructed is equal to $d-1$, so in view of Proposition \ref{p2} it suffices to show that the stars are disjoint and have no cycles.

That the stars are disjoint follows by construction and because $h$ is monotonic.
 
Assume by contradiction that there exists a cycle $E_1,\ldots E_k$. Taking a minimal cycle (one that does not properly contain another cycle) it can be assumed that the points 
$x_i$ ($1\leq i\leq k$) giving the cycle are not repeated. It is claimed first that the stars are cyclically ordered. Let $E_1$ and $E_2$ have the point $x_1$ in common. This  
determines two  arcs in $S^1$: $a_1$ which contains all the points in $E_1$, is an arc starting at $x_1$ not containing any point in $E_2$ and ending in the last point of $E_1$, and $a_2$, 
which contains all the points in $E_2$ is an arc starting at $x_1$ not containing any point in $E_1$ and ending in the last point of $E_2$,. These intervals intersect only at $x_1$
except if $E_1, E_2$ is already a cycle, in which case the claim is obvious.
Assume that the star $E_3$, having the point $x_2$ in common with $E_2$, has a point in $a_2$. In this case $E_3$ must be contained in $a_2$, and so the point $x_3\neq x_2$ is also contained in $a_2$. This implies that the subsequent $E_j$ (if any) are all contained in $a_2$, which forces $x_k\in a_2$, a contradiction since $x_k\in a_1$. This implies that the whole $E_3\setminus \{x_2\}$ is contained in the complement of the union of $a_1$ and $a_2$. By a simple induction argument the claim follows.

The assumption that $x_i$ belongs to $E_i$ and to $E_{i+1}$ implies that there are two different rays giving the same image under $h$. As we are assuming that there is a cycle, 
we have two sequences of rays $r_i$ and $s_i$, $1\leq i\leq k$, having the following properties:
\begin{enumerate}
\item
All the rays $r_i$ and $s_i$ are different. 
\item
The rays are in cyclic order: once $s_1$, $r_1$ and $s_2$ are given, they determine an orientation of $S^1$ such that $s_1<r_1<s_2$. With this orientation fixed, the claim above implies that $r_i<s_{i+1}<r_{i+1}$ for every $i$.
\item 
For each $1 \leq i\leq k-1$, the image under $h$ of the oriented interval $I_i=(r_i,s_{i+1})$ is a point, since the extreme points have the same image under $h$ and $h$ is monotone. The same assertion holds for the interval $I_k=(r_k,s_1)$.
\item
It comes from the construction of stars that for each value of $i$, the image under $\tilde f$ of $r_i$ and $s_i$ is the same.
\end{enumerate}

(See Figure \ref{lem3})
Now use that $ \tilde f$ is a covering of $S^1$, together with properties 2) and 4) to deduce that $\tilde f(I)=S^1$, where 
$I=\cup_{i=1}^kI_i$. On the other hand, since by property 3) the image under $h$ of these union is finite, the equality 
$h\tilde f(I)=m_dh(I)$ is contradicted.

\end{proof}

This maximal set of stars will be denoted ${\mathcal E}$.

\begin{figure}[ht]
\psfrag{phi0}{$\phi_0$}
\psfrag{h}{$h$}
\psfrag{i}{$I$}
\psfrag{r}{$R$}
\psfrag{r1}{$r_1$}
\psfrag{r2}{$r_2$}
\psfrag{r3}{$r_3$}
\psfrag{rr}{$\widetilde{R}$}
\psfrag{s2}{$s_2$}
\psfrag{ff}{$\widetilde{f}$}
\psfrag{s1}{$s_1$}
\psfrag{h}{$h$}
\psfrag{i1}{$I_1$}
\psfrag{i2}{$I_2$}

\begin{center}
\subfigure{\includegraphics[scale=0.2]{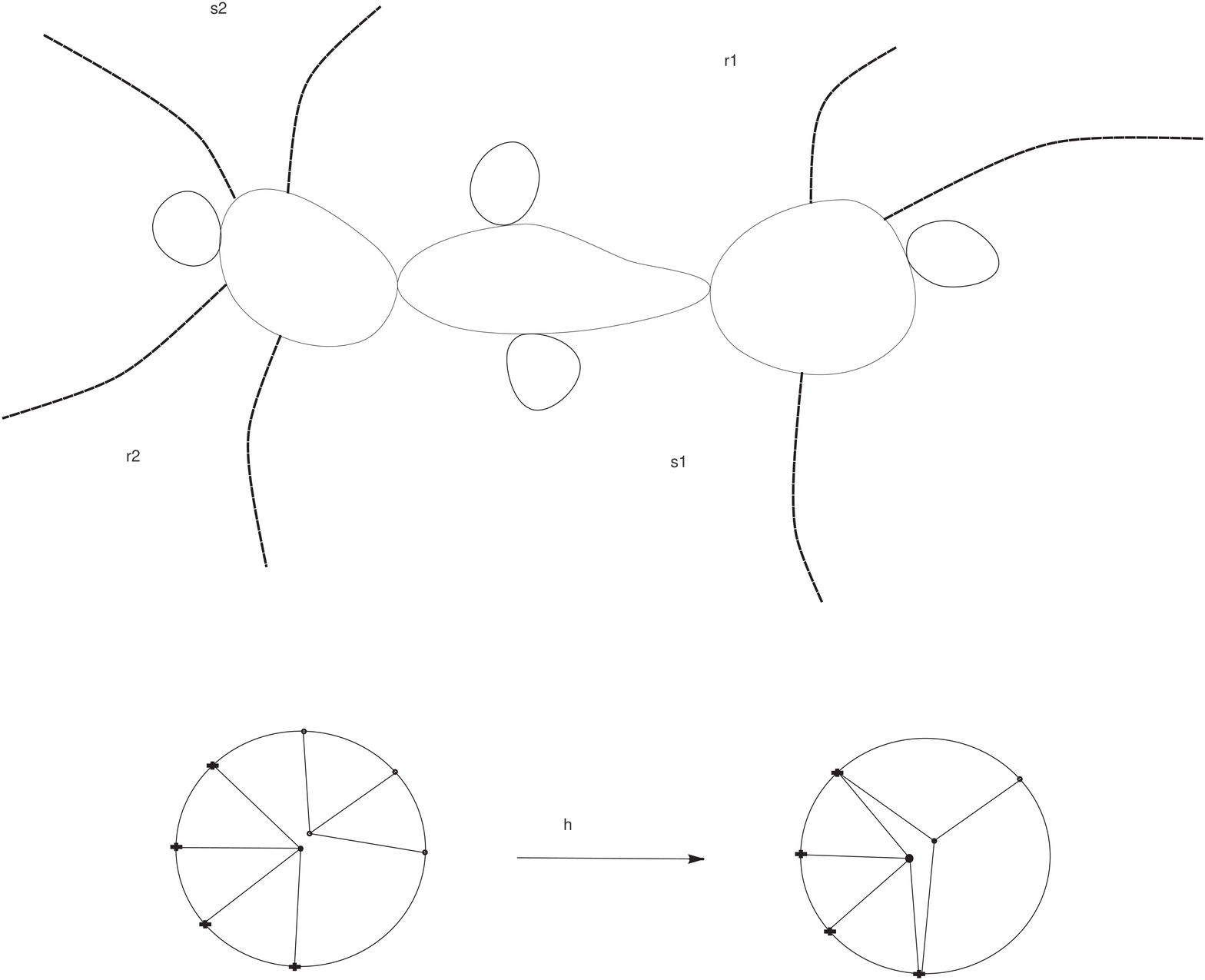}}
\subfigure{\includegraphics[scale=0.21]{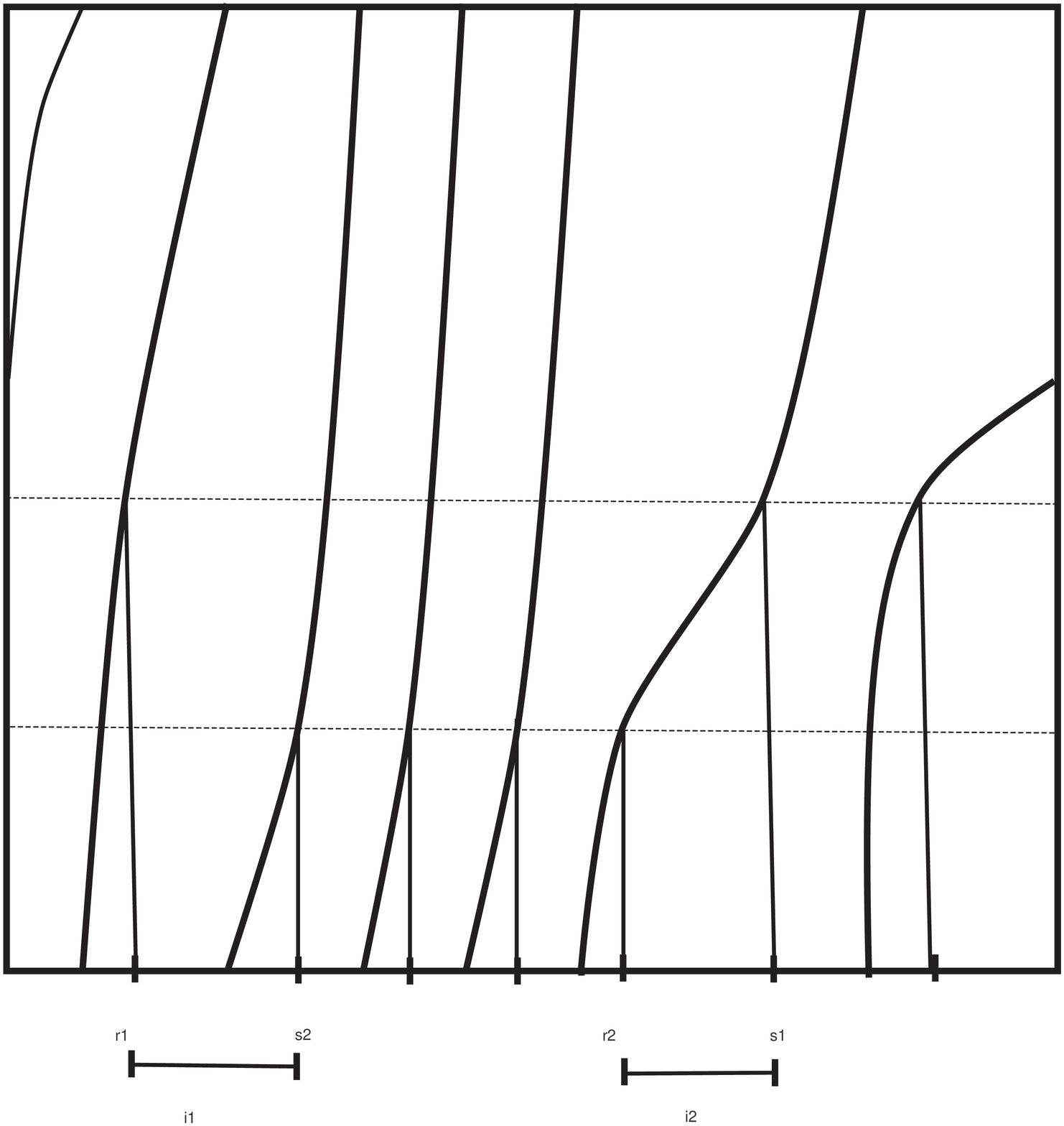}}

\caption{Proof of Lemma \ref{max}}\label{lem3}
\end{center}
\end{figure}

\begin{rk}
\label{r1}
Let $c$ be a critical point in $K$ with multiplicity $k-1$. Then $c$ separates $K$ (recall that $K=\{c\}$ was implicitly excluded in the hypothesis) and the construction above gives rays $s_1,\ldots,s_k$ that land at $c$ and separate the sphere in $k$ components. 
In general, if $c_1$ is another critical point in the complement of $R$ (not necessarily in $K$) with multiplicity $j-1$, then associated to $c_1$ we have defined several rays $r_1\ldots,r_{j}$ that separate $R$ into $j$ components.
Then there are components $W_1$ of $R\setminus\{s_1,\ldots,s_k\}$ and  
$W_2$ of $R\setminus\{r_1,\ldots,r_{j}\}$ with disjoint closures. These components correspond to disjoint connected components $V_1$ and $V_2$ of different stars.
\end{rk}

Take $z\in S^1$ such that $z$ and $m_d^{-1}(z)$ do not intersect stars and such that its forward $m_d$-orbit is dense in $S^1$.
Let $\epsilon$ be a small positive number such that if $I$ is an interval of length $\epsilon$ having $z$ as an extreme point then $m_d^{-1}(I)$ does not intersect any star. Let
$x=\{x_1, \ldots, x_\nu\}$ and $y= \{y_1, \ldots, y_\nu\}$ be periodic orbits of period $\nu$. The following lemma will eventually give us the rate.

\begin{lemma} Let $\nu$ be prime and larger than any period of a critical point. Assume also that 
$\frac{1}{d(d^{\nu}-1)} < d (y_{\nu}, {\mathcal E})$.
If $x_1$ and $y_1$ belong to $I$ and $|x_1-y_1|=1/(d^{\nu}-1)$ then $\phi(x_1)\neq\phi(y_1)$.
\end{lemma}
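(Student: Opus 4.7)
The plan is to argue by contradiction: assume $\phi(x_1) = \phi(y_1) = p_1$ and derive a contradiction.

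First, the semiconjugacy $\phi \circ m_d = f \circ \phi$ propagates the identification to $\phi(x_i) = \phi(y_i)$ for every $i$, and in particular $\phi(x_\nu) = \phi(y_\nu)$. Applying Lemma 1 together with the maximality of $\mathcal{E}$ established in Lemma \ref{max}, one obtains a star $E \in \mathcal{E}$ and a pair $\{a,b\} \subset E$ with $m_d(a) = m_d(b)$ such that $\{a,b\}$ separates $x_\nu$ from $y_\nu$ in $S^1$.

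Next I would locate $x_\nu, y_\nu$ precisely. Both lie in $m_d^{-1}(I)$, a disjoint union of $d$ arcs of length $\epsilon/d$ containing no star point. A direct computation on $m_d$-periodic points, using $|x_1-y_1| = 1/(d^\nu-1)$, gives $y_\nu - x_\nu \equiv d^{\nu-1}/(d^\nu-1) = 1/d + 1/(d(d^\nu-1)) \pmod{1}$ in the signed sense; consequently $x_\nu$ and $y_\nu$ sit in two consecutive components $C_j, C_{j+1}$ of $m_d^{-1}(I)$, and the short arc between them crosses exactly the one gap separating $C_j$ from $C_{j+1}$. Since $C_j$ and $C_{j+1}$ contain no star points, the element of $\{a,b\}$ lying in this short arc -- WLOG call it $a$ -- must sit inside that gap.

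The crucial step (and main obstacle) is to derive a contradiction using the quantitative hypothesis $d(y_\nu, \mathcal{E}) > 1/(d(d^\nu-1))$. The common image $q := m_d(a) = m_d(b)$ satisfies $\phi(q) = f(\phi(x_\nu)) = p_1$, placing $q$ in the same $\phi$-fiber as $x_1, y_1$; moreover $q \notin I$, since $q \in I$ would force both $a, b \in m_d^{-1}(I)$, contradicting $m_d^{-1}(I) \cap \mathcal{E} = \emptyset$. Using the rigid star relation $|a-b| = j'/d$ for some $j' \in \{1, \ldots, d-1\}$, one pins down the possible positions of $a$ in the gap and of $b$ on the complementary (``long'') arc from $y_\nu$ back to $x_\nu$; the bound on $d(y_\nu, \mathcal{E})$, combined with the exact value $|x_1 - y_1| = 1/(d^\nu-1)$, should eliminate all such positions. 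The main obstacle is executing this arithmetic, which requires case analysis on the admissible values of $j'$ and handling separately the slightly different geometries when $d=2$ (where $x_\nu$ and $y_\nu$ are near antipodal) and when $d \geq 3$.
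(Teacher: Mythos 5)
Your opening steps track the paper: propagating $\phi(x_1)=\phi(y_1)$ to $\phi(x_\nu)=\phi(y_\nu)$, computing the separation $y_\nu-x_\nu\equiv 1/d+1/(d(d^\nu-1))\pmod 1$, and concluding that $x_\nu$ and $y_\nu$ must be separated by a star. (The paper gets that last fact from Proposition~\ref{p2}: if not, $\{x_\nu,y'_\nu\}$, with $y'_\nu$ a small displacement of $y_\nu$ making the distance to $x_\nu$ a multiple of $1/d$, would be a new two-point star disjoint from $\mathcal E$ and not creating cycles, contradicting maximality. The quantitative hypothesis on $d(y_\nu,\mathcal E)$ is used exactly here, to keep $y'_\nu$ off the existing stars, and nowhere else.)

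Beyond that point you go in the wrong direction, and the gap is genuine. You propose to finish by a positional/arithmetic elimination using $|a-b|=j'/d$ together with the bound on $d(y_\nu,\mathcal E)$, and you concede you cannot execute it. No such arithmetic contradiction is available: the conclusion depends on the \emph{dynamical} hypothesis of Theorem~A, which your outline never uses. The paper's argument after the star-separation step is as follows. Lemma~\ref{l1} forces $\phi(x_\nu)=\phi(y_\nu)$ to be the landing point of one of the rays defining the separating star; by construction those landing points are periodic only if critical, so $\phi(x_\nu)=c$ for some critical point $c$, and since $\nu$ is prime and exceeds every critical period, $c$ is \emph{fixed}. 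Now the Theorem~A hypothesis (it is not the case that there is a unique fixed critical point of multiplicity $d-1$ in $\partial R$) guarantees a \emph{second} critical point in $R^c$, hence a second star, and Remark~\ref{r1} then produces components $W_1,W_2$ of $R$ minus the corresponding ray systems with disjoint closures. Using density of the forward $m_d$-orbit of $z$ one arranges $\phi(z)\in W_1$ and $f^n(\phi(z))\in W_2$ for some $n$, and for $I$ small enough $f^n(\phi(I))\subset W_2$, which contradicts $\phi(x_1)=c$ being a fixed point lying in $\phi(I)$. Since the hypothesis you omit is precisely the obstruction the theorem is designed to exclude, any argument that never invokes it cannot be completed.
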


\begin{proof} As $|x_1-y_1|=1/(d^{\nu}-1)$, there exists an integer $\ell$ such that:
$$
|x_\nu-y_\nu|=\frac{1}{d(d^\nu-1)}+\frac{\ell}{d}.
$$
But $\ell$ cannot be $0$ since the $x_\nu$ and $y_\nu$ are fixed points of $m_d^\nu$ and the distance between such points cannot be less than 
$1/d^\nu-1$.
Note also that by the choice of $\nu$, $x_\nu$ and $y_\nu$ must be separated by a star: otherwise, taking $y'_{\nu} = y_{\nu} -1/(d^{\nu}-1) $ we have that 
$\{x_{\nu}, y'_{\nu}\}$ is a star, disjoint from every other star in 
${\mathcal E}$ contradicting maximality.

As $\phi(x_\nu)=\phi(y_\nu)$ and $x_\nu$ and $y_\nu$ are separated by stars, the unique possibility given by Lemma \ref{l1} is that $\phi(x_\nu)=\phi(y_\nu)=c$ for some critical 
point $c$ as by construction, the landing points of rays defining the stars are not periodic unless they are critical. (The reader should not that the proof stops here if there are no critical
points in the boundary of $R$).

As the period $\nu$ was taken large, the critical point $c$ is fixed. Now, the hypothesis on the critical points implies that $f$ must have another critical point $c'$ in $R^c$. As 
we have two stars, and the forward orbit of $z$ is dense, we may assume that $\phi(z)$ was taken in the component $W_1$  and that $f ^n (\phi(z))\in W_2$, where $W_1$ and $W_2$ are as in 
Remark \ref{r1} ($W_1$ and $W_2$ have
disjoint closures).   If $I$ is small enough, then $m_d^n (I)$ will not intersect any star and therefore $f^n (\phi(I))\subset W_2$.

Now note that $\phi(x_1)$ is fixed by $ f$  but this contradicts the fact that $f^n (\phi(I))\subset W_2$.

 \end{proof}

Note that there are approximately $\epsilon d^\nu$ fixed points of $m_d^\nu$ in the interval $I$ (where $\epsilon$ is the length of $I$) and these points have the
property that no pair of consecutive points are equivalent. There is another restriction:

\begin{lemma}
If $x,y,z,t$ are points in $S^1$ such that $x\sim y$, $z \sim t$ but $x$ and $z$ are not equivalent, then $z$ and $t$ belong to the same component of $S^1\setminus \{x,y\}$.
\end{lemma}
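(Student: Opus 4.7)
The plan is to invoke the separation property of the impression map $I$ recorded in the paragraph immediately after $I$ is introduced: whenever two distinct prime ends $\xi_1,\xi_2$ have common impression $q\in K$, the union of the two associated rays together with $q$ separates $S^2$, and $I$ sends each of the two components of $\partial\tilde R\setminus\{\xi_1,\xi_2\}$ into the closure of a distinct component of $K\setminus\{q\}$. Set $p:=\phi(x)=\phi(y)$ and note that we may assume $x\neq y$ (otherwise the conclusion is vacuous). Since $h\phi_0=\mathrm{id}$, the map $\phi_0$ is injective, so $\phi_0(x)$ and $\phi_0(y)$ are two distinct prime ends whose common impression under $I$ is $p$, and the cited separation applies with $\xi_1=\phi_0(x)$, $\xi_2=\phi_0(y)$, $q=p$.

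Next I would transfer this separation back to $S^1$ using the monotonicity of $\phi_0$. Because $\phi_0$ lifts to a monotonically increasing injection of $\R$, it maps each of the two arcs of $S^1\setminus\{x,y\}$ into one of the two arcs of $\partial\tilde R\setminus\{\phi_0(x),\phi_0(y)\}$, with different arcs going to different arcs. Composing with $I$ therefore yields the key intermediate fact: $\phi$ sends the two components of $S^1\setminus\{x,y\}$ into the closures of the two different components of $K\setminus\{p\}$, and these closures meet only in the point $p$.

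Finally I would conclude by contradiction. The hypothesis $x\not\sim z$ gives $\phi(z)\neq p$, so in particular $z\notin\{x,y\}$ and, since $\phi(t)=\phi(z)\neq p=\phi(x)=\phi(y)$, also $t\notin\{x,y\}$. If $z$ and $t$ were in different components of $S^1\setminus\{x,y\}$, the previous paragraph would place $\phi(z)$ and $\phi(t)$ in the closures of different components of $K\setminus\{p\}$; equality $\phi(z)=\phi(t)$ would then force both to equal $p$, contradicting $\phi(z)\neq p$. The step deserving the most care is the transfer of the separation across $\phi_0$: even though $\phi_0$ is generally discontinuous, its monotonicity suffices to guarantee that the image of each arc of $S^1\setminus\{x,y\}$ sits on one side of $\{\phi_0(x),\phi_0(y)\}$ in $\partial\tilde R$, and this is all that is required.
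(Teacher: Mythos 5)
Your argument is correct and rests on the same mechanism as the paper's: the two rays landing at $p=\phi(x)=\phi(y)$ separate the sphere, and via the monotonicity of $\phi_0$ this separation pushes forward to show that prime ends in different arcs of $S^1\setminus\{x,y\}$ have impressions that can only agree at $p$ itself. The paper phrases the final step directly in terms of rays (the common landing point $q$ of $r_z,r_t$ must lie on the separating curve $r_x\cup r_y$, hence $q=p$), whereas you route it through the disjointness of the closures of components of $K\setminus\{p\}$; these are the same observation in different clothing, and your extra care about why $\phi_0$ preserves the arc decomposition is a reasonable elaboration of what the paper leaves implicit.
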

Let $r_x$ and $r_y$ be rays landing at the same point $p$.  Then, $r_x\cup r_y\cup p$ separate the plane. Now if $z$ and $t$ belong to different connected components of $S^1\setminus \{x,y\}$ this means, as they are equivalent, that the landing point of $r_z$ and $r_t$ is also $p$, contradicting that $x$ and $z$ are not equivalent.

Notice that there are $[\epsilon d^\nu]$ fixed points of $m_d^\nu$ in the interval $I$, where $[x]$ is the integer part of $x$.
Let $n=[\epsilon d^\nu]$. The next abstract result implies that there are many different equivalence classes.

\begin{lemma}
\label{saltos}
Let $R$ be an equivalence relation in the set $\{1,\ldots,n\}$ such that the following properties hold:\\
1. $(i,i+1)\notin R$ for every $i$, and \\
2. If $L$ and $L'$ are different classes, then each one of them is contained in a connected component of the complement of the other.\\
Then the number of classes greater than or equal to $[n/2]+1$.
\end{lemma}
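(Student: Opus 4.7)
My plan is to prove the bound by strong induction on $n$, using the equivalence class of the leftmost element $1$ as a pivot to split the problem into smaller instances, one per ``gap'' of that class. The base case $n=1$ gives $c=1=[1/2]+1$ immediately.

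For the inductive step, let $L$ be the class of $1$, with elements $1=a_1<a_2<\cdots<a_k$ in increasing order. Condition 1 forces $a_{j+1}-a_j\geq 2$, so the interior gaps $J_j=\{a_j+1,\ldots,a_{j+1}-1\}$ for $1\leq j<k$ all have $g_j:=|J_j|\geq 1$, while the tail gap $J_k=\{a_k+1,\ldots,n\}$ may be empty. By condition 2 applied to $L$ and any other class $L'$, the class $L'$ lies in a single component of $\{1,\ldots,n\}\setminus L$, i.e.\ inside some $J_j$. A direct verification shows that the equivalence relation restricted to each nonempty $J_j$ still satisfies both hypotheses of the lemma---the key point being that the components of $\{1,\ldots,n\}\setminus L''$ intersected with $J_j$, for any class $L''\subset J_j$, coincide with the components of $J_j\setminus L''$. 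The inductive hypothesis then yields at least $[g_j/2]+1$ classes inside each nonempty $J_j$, so
\[
c\;\geq\; 1+\sum_{j:\,g_j\geq 1}\bigl([g_j/2]+1\bigr).
\]
Using $[g/2]\geq (g-1)/2$ together with $\sum_j g_j=n-k$, a short case analysis on whether $g_k\geq 1$ or $g_k=0$ gives either $c\geq 1+n/2$ or $c\geq (n+1)/2$; since $c$ is an integer, both inequalities imply $c\geq [n/2]+1$.

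The main obstacle is the transfer of the non-crossing hypothesis (condition 2) down to each gap $J_j$, since this is really the only place where that hypothesis is used in the argument. Once it is checked, the rest is routine combinatorics, and the integrality of $c$ comfortably absorbs the slack from $[g/2]\geq (g-1)/2$ being tight only for odd $g$.
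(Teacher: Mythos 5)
Your proof is correct and follows essentially the same route as the paper's: both pivot on the equivalence class of $1$, observe that every other class lies entirely inside a single gap of that class by condition 2, and apply strong induction gap by gap. The only cosmetic difference is in the closing arithmetic -- the paper splits the gaps by parity (even/odd $p_i$) and compares $2e+o-n_1$ to $2[n/2]-n$ directly, whereas you use the uniform estimate $[g/2]\geq (g-1)/2$ and let integrality of the class count absorb the slack; both finish the same induction.
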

\begin{proof}
Assume that the property holds for every number less than $n$ and let $R$ be an equivalence relation in $\{1,\ldots,n\}$. Let $C_1$ be the class of $1$. Denote by $\sigma_1,\ldots,\sigma_k$ the maximal intervals of the complement of $C_1$. Note that $k=n_1$ or $k=n_1-1$, where $n_1$ is the number of elements in $C_1$. If $p_i$ denotes the cardinal of $\sigma_i$, then $n_1+\sum p_i=n$. Say that $i\in E$ if $p_i$ is even and that $i\in O$ if $p_i$ is odd. Let $e$ denote the number of elements in $E$ and $o$ the number of elements in $O$.\\
By the hypothesis (2) on the classes, note that the number $N_R$ of equivalence classes of $R$ is at least
$$
N_1=1+\sum_{i\in E}(1+\frac{p_i}{2})+ \sum_{i\in O}(1+\frac{p_i-1}{2}),
$$
where the first $1$ comes for the class $C_1$, and the induction hypothesis was used in each $\sigma_i$. Rearranging terms it comes that $N_R$ satisfies the thesis of the lemma if and only if
$$
2e+o-n_1\geq 2[n/2]-n.
$$

Note that $e+o = n_1$ or $e+o = n_1-1$ , so $2e+o-n_1$ is either $e$ or $e-1$.  So the last equation is valid if $e>0$ or $e+o=n_1$ since the number on the right hand side is equal to $0$ or $-1$. If $e=0$ and $e+o=o=n_1-1$ then 
$n=\sum_op_i+n_1$ is an odd number, so both sides are equal to $-1$.
\end{proof}

Note: For every $n$ there is an equivalence relation $R$ such that the number of classes is equal to $[n/2]+1$: one class is the set of odd numbers, and every even number constitutes a class.

From this last lemma we obtain Theorem A. Indeed, for every prime number $\nu$ large enough the number of equivalence classes of fixed points 
of $m_d^\nu$ within the interval $I$ is at least $[\epsilon d^\nu/2]$ and as different equivalence classes correspond to different fixed points of $f^\nu$, then the rate of $f$ is not less than 
$$
\lim_\nu\frac{1}{\nu}\log[\epsilon d^\nu/2]=\log d.
$$

\section{Proof of Theorem B}

This section is devoted to the proof of :\\
\noindent
{\bf Theorem B.} {\em Let $f$ be a degree $d$ branched covering of the sphere, where $|d|>1$. Assume that there exists  a simply connected open set $U$ whose closure is disjoint from the
set of critical
values and 
such that $\overline{f^{-1} (U)}\subset U$. Then $f$ has the rate.}\\

The proof relies on Brouwer's Theory for orientation preserving homeomorphisms of the plane.  We will use the following result:

\begin{thm}\label{cal}\cite{cl}  Let $f:\R^2\to \R^2$ be an orientation preserving homeomorphism and let $K\subset \R^2$ be an $f$-invariant non-separating continuum.  Then, there exists 
$x\in K$ such that $f(x) = x$.
 
\end{thm}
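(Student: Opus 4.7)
The plan is to argue by contradiction using Brouwer's plane translation theorem. Assume $f|_K$ has no fixed points; we aim for an absurdity. Recall that Brouwer's theorem guarantees, for any orientation-preserving homeomorphism of $\R^2$ without fixed points, a free disc through every point, from which it follows that such a map has empty non-wandering set and in particular admits no nonempty compact invariant subset (any point of such a subset would, by compactness of its orbit, accumulate on itself and hence be non-wandering). It therefore suffices to replace $f$ by an orientation-preserving homeomorphism $g$ of $\R^2$ which (a) still satisfies $g(K)=K$, and (b) is globally fixed-point-free; applying the above corollary to $g$ and the nonempty compactum $K$ then yields the contradiction.

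The construction of $g$ is where the non-separating hypothesis is exploited. Since $\fix(f)$ is closed in $\R^2$ and disjoint from the compactum $K$, there is a relatively compact open neighborhood $V\supset K$ with $\overline V\cap\fix(f)=\emptyset$; thus $f$ is already fixed-point-free on $\overline V$. Because $K$ is non-separating, $\R^2\setminus K$ is connected and, viewed in $S^2$, a simply connected region, so it is topologically an open annulus with ends at $K$ and at $\infty$. On this annulus one constructs an orientation-preserving homeomorphism $\psi$, supported in $\R^2\setminus\overline V$, that pushes every point along the annular direction toward $\infty$ by a large and suitably generic amount (for instance the time-$t$ map of a nonvanishing vector field, with $t$ chosen so that the displacement exceeds, and is not negated by, that of $f$ outside $V$). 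The composition $g:=\psi\circ f$ then agrees with $f$ on a neighborhood of $K$ (so $g(K)=K$) and has no fixed points: inside $V$ because $f$ has none there and $\psi$ is the identity, outside $V$ because the strong outward push of $\psi$ prevents $\psi(f(x))=x$.

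The principal difficulty is ensuring that the modification $g$ is genuinely fixed-point-free while remaining an orientation-preserving homeomorphism. A naive translational $\psi$ can conspire with the geometry of $f$ outside $V$ to create new fixed points (e.g.\ if $f$ exhibits reflectional behavior there), so the flow must be chosen with some care, using the annular structure of $\R^2\setminus K$, to displace every point at a rate and in a direction incompatible with the inverse displacement of $f$. The non-separating hypothesis is essential precisely because it is the topological condition providing this annular structure and the corresponding nonvanishing flow to infinity; if $K$ separated the plane, the bounded complementary components would harbor points of $\fix(f)$ with no escape to infinity, and the reduction to the globally fixed-point-free case would break down. Once $g$ is constructed, Brouwer's theorem closes the argument.
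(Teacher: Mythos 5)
First, note that the paper does not prove this statement at all: it is the classical Cartwright--Littlewood theorem, quoted from \cite{cl}, with a pointer to Brown's one-page proof \cite{brown}. So your proposal has to stand on its own, and it does not: the decisive step is missing. Your plan is (i) assuming $f$ has no fixed point in $K$, build an orientation-preserving homeomorphism $g=\psi\circ f$ of $\R^2$ that is globally fixed-point free and still satisfies $g(K)=K$, and then (ii) invoke the Brouwer translation theorem to say that a fixed-point-free orientation-preserving plane homeomorphism admits no nonempty compact invariant set. Step (ii) is correct and standard, but it says precisely that the object you want to build in step (i) cannot exist. Hence step (i) can never be carried out by a soft, explicit construction such as ``compose with a generic push toward infinity supported off $\overline V$''; any correct argument for (i) must itself be a proof by contradiction that uses the invariance of $K$ and the non-separating hypothesis in an essential, quantitative way, and your sketch does not.

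Concretely, the push $\psi$ fails to address the real obstructions. Since $\psi=\mathrm{id}$ on $\overline V$, your $g$ is forced to agree with $f$ on $f^{-1}(\overline V)\supset K$, so any periodic orbit of $f$ lying in $V$, or more generally any Brouwer periodic chain of free discs of $f$ confined to the region where $g=f$, survives in $g$ and forces $g$ to have a fixed point no matter how violently $\psi$ pushes outside $\overline V$; nothing in the hypotheses you actually use (``no fixed points of $f$ on $\overline V$'') excludes such orbits or chains, and excluding them near $K$ is essentially the content of the theorem. Your treatment of the region outside $V$ (``displacement exceeds, and is not negated by, that of $f$'') is likewise not a proof: $f$ is an arbitrary homeomorphism there, its displacement is unbounded and its direction uncontrolled, and no choice of a single time-$t$ map is shown to avoid solutions of $\psi(f(x))=x$. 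For comparison, the standard short argument (Brown) runs differently and avoids any extension problem: assuming no fixed point of $f$ in $K$, one takes the component $U$ of $\R^2\setminus\fix(f)$ containing $K$, observes that $U$ is invariant and $f|_U$ is fixed-point free, uses that $K$ is non-separating to lift $K$ homeomorphically to the universal cover of $U$ (a plane), chooses the lift of $f|_U$ preserving this lifted compactum, and applies Brouwer's translation theorem to that lift to reach the contradiction. If you want to salvage your approach, you would have to prove an extension lemma of the type ``a fixed-point-free orientation-preserving embedding of a disc neighborhood of an invariant non-separating continuum extends to a fixed-point-free homeomorphism of the plane,'' which is not easier than the theorem itself.
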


An easy proof or Cartwright-Littlewood's theorem can be found in  the  single-page paper of M.Brown \cite{brown}. Existence of a fixed point under the hypothesis that a compact 
subset is preserved was already known (Brouwer's plane translation theorem \cite{brou}). To prove that the fixed point must belong to the set $K$ one needs connectedness of such a set 
(just think of a rational rotation, where every periodic orbit is a compact invariant set disjoint from the set of fixed points).

\begin{proof} The hypothesis implies that $\overline{f^{-1} (U)}$ has $d$ connected components denoted $W_1,\ldots,W_d$, each $W_i$ closed and contained in $U$. Besides, 
$f(W_i) =\overline U$ for all $i$.

We will construct by induction on $n$ a sequence of $d^n$ sets $W^n_a$, indexed with $a$, sequences of $n$ elements between $1$ and $d$, where $W^1_i = W_i$.

Given $a\in \{1,\ldots,d\}^\N$ let $a|_n$ be the restriction of $a$ to the set $\{1,\ldots,n\}$

The induction hypothesis:\\ 
For each $j< n$ and $a\in \{1,\ldots,d\}^j$ there exists a set $W^j_a$ satisfying the following properties:
\begin{enumerate}
\item
$W^j_a$ is a compact connected subset of $W^{j-1}_{a'}$, where $a'$ is the restriction of $a$ to $\{1,\ldots,j-1\}$, i.e, $a'=(a_1,\ldots,a_{j-1})$.
\item
$W^{j-1}_{a'}=\cup_{i=1}^nW^{j}_{a'i}$ where $a'i$ is equal to $(a_1,\ldots,a_{j-1},i)$.
\item
$f(W^j_{ia'})=W^{j-1}_{a'}$ where $ia'$ is equal to $(i, a_1, \ldots, a_{j-1})$.
\end{enumerate}

Given $a=(a_1,\ldots,a_n)$, let $a''=(a_2,\ldots,a_{n})$, and define $W^n_a=f^{-1}(W^{n-1}_{a''})\cap W^1_{a_1}$.
As $W^{n-1}_{a'}$ is not empty and contained in $U$, then it has one $f$-preimage in each $W^1_i$. This implies the properties (1) to (3) above for $j=n$.

Then define $K_a$ for $a\in\{1,\ldots,d\}^\N$ as

$$
K_a=\cap_{n\geq 1} W^n_{a|_n}.
$$

Note that $K_a$ is a nonempty connected non-separating and compact subset as it is the decreasing intersection of compact connected sets. Moreover, if $a$ is a periodic element of $\{1,\ldots,d\}^\N$ with period $k$, then $K_a$ is $f^k$ invariant. Moreover, there 
exists a neighborhood $V$ of $K_a$ homeomorphic to a disc such that $f^j(V)\cap S_f$ is empty for every $j\leq k$, and it follows that $f^k$ restricted to $V$ is a homeomorphism onto 
its image. One can then extend $f^k|_V$ to a plane homeomorphism and apply Cartwright-Littlewood's Theorem \ref{cal} to obtain that $f^k$ has a fixed point in $K_a$. As the sets $K_a$ are
disjoint, and 
there are $d^k$ different sequences in $\{1,\ldots,d\}^\N$
having period $k$, the result follows.
 
\end{proof}

\end{document}